\newtheorem{Teo}{Theorem}[section]
\newtheorem{Prop}[Teo]{Proposition}
\newtheorem{Lema}[Teo]{Lemma}
\newtheorem{Claim}[Teo]{Claim}
\newtheorem{theorem}{Theorem}[section]
\newtheorem{lemma}[theorem]{Lemma}
\theoremstyle{definition}
\newtheorem{Def}[Teo]{Definition}
\newtheorem{Obs}[Teo]{Remark}
\newtheorem{Pro}[Teo]{Problem}
\newcommand{\N}{\mathbb{N}}
\newcommand{\Llr}{\Longleftrightarrow}
\newcommand{\lra}{\longrightarrow}
\newcommand{\Lra}{\Longrightarrow}
\newcommand{\VR}{\mathcal{O}}
\newcommand{\PI}{\mathfrak{p}}
\newcommand{\MI}{\mathfrak{m}}
\newcommand{\QI}{\mathfrak{q}}
\newcommand{\NI}{\mathfrak{n}}
\newcommand{\hei}{\mbox{\rm ht}}
\newcommand{\QF}{\mbox{\rm Quot}}
\newcommand{\Gl}{\mbox{\rm Gal}}
\newcommand{\Sp}{\mbox{\rm Spec}}
\newcommand{\Spec}{\mathrm{Spec}\,}
\newcommand{\sep}{\;|\;}
\newcommand{\Rad}{\mathrm{Rad}\,}
\begin{document}
\title{Henselian elements}
%\subtitle{With an appendix by Hagen Knaf}
\author[Kuhlmann]{Franz-Viktor Kuhlmann}
\author[Novacoski]{Josnei Novacoski\\
\mbox{ }\\
\tiny{\bf with an appendix by
Hagen Knaf}}

%\author{Franz-Viktor Kuhlmann}
%\author{Josnei Novacoski}

\address{Department of Mathematics and Statistics,
University of Saskatchewan,
Saskatoon, \newline \indent
SK S7N 5E6, Canada}
\email{fvk@math.usask.ca}

\address{Department of Mathematics and Statistics,
University of Saskatchewan,
Saskatoon, \newline \indent
SK S7N 5E6, Canada}
\email{jan328@mail.usask.ca}

\keywords{Henselian elements, local uniformization, elimination of ramification}
\subjclass[2010]{Primary 13A18; Secondary 12J10, 13F30, 13H05, 14B05}
\thanks{During the work on this paper, the first author was partially supported by a Canadian NSERC grant and a sabbatical grant from the University of Saskatchewan. The second author was supported by a graduate research fellowship at the University of Saskatchewan.}

\begin{abstract}
Henselian elements are roots of polynomials  which satisfy the conditions of Hensel's Lemma. In this paper we prove that for a finite field extension $(F|L,v)$, if $F$ is contained in the absolute inertia field of $L$, then the valuation ring $\VR_F$ of $(F,v)$ is generated as an $\VR_L$-algebra by henselian elements. Moreover, we give a list of equivalent conditions under which $\VR_F$ is generated over $\VR_L$ by finitely many henselian elements. We prove that if the chain of prime ideals of $\VR_L$ is well-ordered, then these conditions are satisfied. We give an example of a finite valued inertial extension $(F|L,v)$ for which $\VR_F$ is not a finitely generated $\VR_L$-algebra. We also present a theorem that relates the problem of local uniformization with the theory of henselian elements.
%This theorem shows in particular, that if we obtain elimination of ramification for a function field for a %good transcendence basis, then the valuation admits local uniformization.

\end{abstract}

\maketitle
\section{Introduction}
For an extension $A\subseteq B$ of rings with unity, an element $b\in B$ is called a \textbf{henselian element} over $A$ if there exists a polynomial $h(X)\in A[X]$ (not necessarily monic) such that $h(b)=0$ and $h'(b)$ is a unit of $B$. In this case, $h$ is called a \textbf{henselian polynomial for $b$}. Henselian elements play an important role, implicitly or explicitly, in many problems of valuation theory. The notion of ``\'etale extension" is closely related to the concept of henselian elements.

The problem of local uniformization for a valued function field $(F|K,v)$ turns out to be close to the problem of elimination of ramification. The valued function field $(F|K,v)$ is said to admit \index{Local uniformization!for a valued function field}\textbf{local uniformization} if for every finite set $Z\subseteq \VR_F$ there exists an affine model $V$ of $(F|K,v)$ such that the center $\PI$ of $v$ on $V$ is a regular point and $Z\subseteq \VR_{V,\PI}$. Elimination of ramification asks whether there exists a transcendence basis $T$ of $F|K$ such that $F$ lies in the ``absolute inertia field" $L^i$ of $L=K(T)$ with respect to some extension of $v$ to the separable-algebraic closure of $F$ (see Definition \ref{Defif}).

A possible approach for local uniformization is to prove that the valued function field $(F|K,v)$ admits elimination of ramification with a transcendence basis $T$ for which the valued rational function field $(L|K,v)$ admits local uniformization. Then in order to find a model $V$ of $(F|K,v)$ we can find a convenient model $V'$ of $(L|K,v)$ and extend it via the inertial extension $F|L$. The set $Z$ appearing in the definition of local uniformization plays an essential role in this task. This is because, when finding the model $V'$, we can require that not only elements obtained from the original set $Z$, but also elements needed to generate the extension $F|L$ belong to $\VR_{V',\PI'}$ ($\PI'$ being the center of $v$ on $V'$). Using this approach, Knaf and the first author proved that every ``Abhyankar valuation" admits local uniformization (see \cite{KK}) and that every valuation admits local uniformization in a finite separable extension of the function field (see \cite{KK_1}). Here, we use this approach to prove our Theorem \ref{Teo_4} (and consequently, also Theorem \ref{Teo_5}) below.

Since an algebra essentially generated by henselian elements over a regular ring is regular (see Proposition \ref{Spiv}), it is important to answer the following:

\begin{Pro}
Take a valued field extension $(F|L,v)$ such that the field $F$ lies in the absolute inertia field $(L^i,v)$ of $L$ and $[F:L]<\infty$ (for short we will call this \textbf{a finite valued inertial extension}). Can we find a generator of $F$ over $L$ which is a henselian element? If that is the case, what can be said about the valuation rings? For instance, is $\VR_F$ generated as an $\VR_L$-algebra by henselian elements?
\end{Pro}

In slightly different terms, these questions were posted by the first author on ``The Valuation Theory Home Page" in form of conjectures (see \cite{FVKC}). On the same web page, Roquette and van den Dries (see \cite{Roqp} and \cite{Vdr}) gave interesting answers to these problems. A part of this paper is dedicated to summarize those answers and extend them to more general settings.

For a field $L$, we will denote by $L^{\mbox{\tiny\rm sep}}$ its separable-algebraic closure, and by $\tilde{L}$ its algebraic closure.
We are interested in working with a finite valued inertial extension $(F|L,v)$, but some of our results work for the following more general case:
\begin{equation}                           \label{sit}
\left\{\begin{array}{ll}
A & \mbox{an integrally closed local domain with quotient field $L$,}\\
\widetilde{\MI} & \mbox{a maximal ideal  of the integral closure $I(A)$ of $A$ in $L^{\mbox{\tiny\rm sep}}$,}\\
F & \mbox{a finite extension of $L$ lying in the inertia field of $L^{\mbox{\tiny\rm sep}}|L$ w.r.t.\ $\widetilde{\MI}$,}\\
A^* & =F\cap I(A) \mbox{ the integral closure of $A$ in $F$,}\\
B & =A^*_{\widetilde{\MI}\cap A^*}\>.
\end{array}\right.
\end{equation}

The next theorem was proved by the first author in \cite{FVKC} for the valuative case, i.e., when $B=\VR_F$ and $A=\VR_L$ for a finite valued inertial extension $(F|L,v)$.

\begin{Teo}\label{main1}
Assume the situation described in (\ref{sit}).
Then there is $\eta\in B$ such that $F=L(\eta)$, the (monic) minimal polynomial $h(X)$ of $\eta$ over $L$ lies in $A[X]$ and $\eta$ and $h'(\eta)$ are units in $B$. In particular, $\eta$ is a henselian element over $A$.
\end{Teo}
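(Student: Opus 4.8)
The plan is to build $\eta$ as a primitive element of $F|L$ that already lies in $B$ and is a unit there, then adjust it so that its minimal polynomial has coefficients in $A$ and separable derivative a unit at $\widetilde{\MI}\cap A^*$. First I would recall the structure of inertial extensions: since $F$ lies in the inertia field of $L^{\mathrm{sep}}|L$ with respect to $\widetilde{\MI}$, the extension $A \subseteq B$ is unramified in the strong sense that the residue field extension $B/\widetilde{\MI}B \,|\, A/\MI_A$ (where $\MI_A$ is the maximal ideal of $A$) is separable of degree $[F:L]$, and $\MI_A B = \widetilde{\MI}\cap A^*$ localized, i.e.\ $\MI_A$ generates the maximal ideal of $B$. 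By the primitive element theorem for the residue extension, choose $\bar\eta_0$ generating $B/\MI_A B$ over $A/\MI_A$, and lift it to some $\eta_0 \in A^*$ (or in $B$); then $A/\MI_A[\bar\eta_0]$ has dimension $[F:L]$ over the residue field, which forces $\eta_0$ to already generate $F$ over $L$ by a Nakayama/degree count. So the primitive element can be found; the work is in making it a \emph{henselian} element with the right unit conditions.

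Next I would arrange the unit conditions. To get $\eta$ itself a unit in $B$, replace $\eta_0$ by $\eta_0 + c$ for a suitable $c \in A$ so that the reduction $\bar\eta_0 + \bar c$ is nonzero in the residue field (possible as long as the residue field is infinite; if it is finite one argues slightly differently, e.g.\ using that $B$ is semilocal with a single maximal ideal and choosing $c$ avoiding the finitely many "bad" residues, or passing to $\eta_0(\eta_0-c')^{-1}$ type tricks — but since $B$ is \emph{local} here, avoiding one value in an infinite-or-large-enough residue field, or a direct argument, suffices). For the minimal polynomial: let $h(X) \in L[X]$ be the monic minimal polynomial of $\eta$. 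Since $\eta \in A^*$ is integral over $A$ and $A$ is integrally closed, $h(X) \in A[X]$ automatically — this is the standard fact that the minimal polynomial of an integral element over an integrally closed domain has coefficients in that domain. Finally, $h'(\eta)$ is a unit in $B$ precisely because the residue extension is separable: reducing mod $\MI_A B$, $\bar h(X)$ is (a power of, but here exactly) the minimal polynomial of the separable generator $\bar\eta$, so $\overline{h'(\eta)} = \bar h'(\bar\eta) \neq 0$, hence $h'(\eta) \notin \widetilde{\MI}\cap A^*$, i.e.\ $h'(\eta) \in B^\times$. Then $h$ is a henselian polynomial for $\eta$ over $A$.

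The main obstacle is the interplay between the local ring $B$ being possibly \emph{not} a valuation ring and the need to control reductions modulo its maximal ideal: one must verify that $\widetilde{\MI}\cap A^*$ is the unique maximal ideal of $A^*$ lying over $\MI_A$ used to define $B$ and that $\MI_A B$ is its maximal ideal (so that "reducing mod the maximal ideal of $B$" and "reducing mod $\MI_A$" agree), which is exactly where the hypothesis "$F$ in the inertia field w.r.t.\ $\widetilde{\MI}$" does its job — this is classical decomposition/inertia theory but requires care in the general (non-Noetherian, non-valuative) setting of (\ref{sit}). The secondary subtlety is the finite-residue-field case for forcing $\eta$ to be a unit; this is handled by a small trick rather than a genericity argument. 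Everything else (the degree count showing $\eta$ generates $F$, integral closedness giving $h \in A[X]$, separability giving $h'(\eta)$ a unit) is routine once the decomposition-theoretic setup is in place.
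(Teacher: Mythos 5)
Your proof has a genuine gap, and it appears early: the claim that the residue extension $B/\MI_B\,|\,A/\MI_A$ is ``separable of degree $[F:L]$'' is false in the setting of (\ref{sit}). For an inertial extension the ramification index is $1$, so $[F:L]=\sum_i f_i$ where the sum runs over \emph{all} the (conjugate) maximal ideals of $A^*$; your $k:=[B/\MI_B:A/\MI_A]$ is only one of these summands and can be strictly smaller than $[F:L]$ whenever $A^*$ has more than one maximal ideal over $\MI_A$. A concrete instance: $A=\Z_{(5)}$, $L=\Q$, $F=\Q(i)$, $\widetilde{\MI}$ above $(2+i)$. Then $F$ lies in the henselization $\Q^h\subseteq\Q^i$, so $F\subseteq L^i$; but $B/\MI_B=\F_5=A/\MI_A$, so $k=1<2=[F:L]$.

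This breaks both of your main steps simultaneously. First, the ``Nakayama/degree count'': a lift $\eta_0$ of a residue generator need not generate $F$ over $L$, because the residue degree is only $k$, not $[F:L]$ (in the example above the residue extension is trivial, so $\eta_0$ could lie in $L$). Second, and more importantly, the assertion that $\bar h$ ``is exactly the minimal polynomial of the separable generator $\bar\eta$'' fails: $\bar h$ is monic of degree $[F:L]$, while the minimal polynomial $g$ of $\bar\eta$ has degree $k$. You have $g\mid\bar h$, but nothing controls the cofactor. Unless you can show $\bar\eta$ is a \emph{simple} root of $\bar h$, you cannot conclude $\overline{h'(\eta)}\neq 0$, and the henselianity breaks down.

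The missing idea is exactly the conjugate-control step in the paper: one does not merely lift a residue generator, but uses the Chinese Remainder Theorem to pick $\eta\in A^*$ so that $\eta$ reduces to a nonzero residue generator $\vartheta$ at $\MI$ \emph{and} lies in every other maximal ideal $\MI_\sigma\neq\MI$ of $A^*$. Working in the Galois closure $F'$ (and its local ring $B'$), every embedding $\sigma$ with $\MI_\sigma\neq\MI$ then sends $\eta$ into $\MI_{B'}$, so the residue of $h(X)=\prod_i(X-\sigma_i\eta)$ factors as $g(X)\cdot X^{\,n-k}$; since $\vartheta\neq 0$, the polynomials $g$ and $X$ are coprime, making $\vartheta$ a simple root and $h'(\eta)$ a unit (this is Lemma~\ref{Lemmanew} together with part~(i) of Lemma~\ref{Lroq}). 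The pieces you did get right --- $h\in A[X]$ by integral closedness, separability making $\bar\eta$ a simple root of $g$, adjusting to make $\eta$ a unit --- are all present in the paper's proof, but without control over the embeddings $\sigma$ with $\MI_\sigma\neq\MI$, the argument doesn't close.
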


For applications, for instance to the local uniformization problem, it is important to know whether for a finitely generated $A$-algebra $R$ lying in $B$ there exists a unit $u$ of $B$ in $A[\eta]$ such that $R\subseteq A[\eta,1/u]$. Our next theorem answers this question to the affirmative. It was proved originally by van den Dries in \cite{Vdr} for the valuative case; our proof for the general case is a straightforward adaptation.

\begin{Teo}\label{main2}
With the situation as in (\ref{sit}) and $\eta$ the element of Theorem \ref{main1}, we have that $B=A[\eta]_\mathfrak n$ where $\mathfrak n=A[\eta]\cap \widetilde{\MI}$. In particular, for every finite set $Z\subseteq B$ there exists a unit $u$ of $B$ in $A[\eta]$ such that $Z\subseteq A[\eta,1/u]$.
\end{Teo}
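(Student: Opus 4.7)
The inclusion $A[\eta]_{\mathfrak n}\subseteq B$ is automatic: since $\eta$ is integral over $A$, we have $A[\eta]\subseteq A^*$, and by definition $\mathfrak n=A[\eta]\cap\widetilde{\MI}=A[\eta]\cap(\widetilde{\MI}\cap A^*)$, so the natural map $A[\eta]\to B$ inverts $A[\eta]\smallsetminus\mathfrak n$ and factors through $A[\eta]_{\mathfrak n}$. The content of the theorem is therefore the reverse inclusion.

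The main tool will be the classical fact that, since $A$ is integrally closed with quotient field $L$ and the minimal polynomial $h$ of $\eta$ lies in $A[X]$, one has
\[
h'(\eta)\cdot A^*\subseteq A[\eta].
\]
The proof is a one-paragraph trace-duality computation: writing $h(X)/(X-\eta)=\sum_{i=0}^{n-1}b_iX^i$ in $F[X]$ one checks inductively that $b_i\in A[\eta]$, and that $\{b_i/h'(\eta)\}$ is the dual basis of $\{\eta^i\}$ under the trace form $\mathrm{Tr}_{F/L}$. For $\alpha\in A^*$ the coordinates $\mathrm{Tr}_{F/L}(\alpha b_i)$ lie in $A$ because $\alpha b_i$ is integral over $A$ and $A$ is integrally closed, whence $h'(\eta)\alpha\in A[\eta]$.

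Feeding this into our setting, since $h'(\eta)$ is a unit of $B$ it lies outside $\widetilde{\MI}$, and being in $A[\eta]$ it lies outside $\mathfrak n$. Hence $A^*\subseteq A[\eta][1/h'(\eta)]\subseteq A[\eta]_{\mathfrak n}$. Moreover, any $s\in A^*\smallsetminus\widetilde{\MI}$ becomes invertible in $A[\eta]_{\mathfrak n}$: the product $h'(\eta)s$ belongs to $A[\eta]$ and, by primality of $\widetilde{\MI}$, avoids $\widetilde{\MI}$ and hence $\mathfrak n$, so $s=(h'(\eta)s)/h'(\eta)$ is a ratio of two units of $A[\eta]_{\mathfrak n}$. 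Consequently every element of $B=A^*_{\widetilde{\MI}\cap A^*}$ lies in $A[\eta]_{\mathfrak n}$, proving $B=A[\eta]_{\mathfrak n}$.

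The ``in particular'' statement is then a routine localization bookkeeping: writing each $z\in Z$ as $a_z/s_z$ with $a_z,s_z\in A[\eta]$ and $s_z\notin\mathfrak n$, the product $u=\prod_{z\in Z}s_z$ lies in $A[\eta]$, avoids the prime $\mathfrak n$, is therefore a unit of $B$, and by construction $Z\subseteq A[\eta,1/u]$. The only genuinely nontrivial ingredient in the whole argument is the classical conductor-type inclusion $h'(\eta)A^*\subseteq A[\eta]$; once that is in hand the remainder is purely formal, which is presumably why the authors describe the proof as a ``straightforward adaptation'' of van den Dries's valuative argument.
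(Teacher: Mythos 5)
Your proposal is correct, and it hinges on the same key ingredient as the paper: the conductor inclusion $h'(\eta)A^*\subseteq A[\eta]$, which is exactly the paper's Lemma~\ref{12.5.17} (your trace-duality sketch reproduces its proof). Where you diverge is in the finishing move. The paper sandwiches $A[\eta]_{\mathfrak n}$ via the chain
\[
A^*\subseteq \tfrac{1}{h'(\eta)}A[\eta]\subseteq A[\eta]_{\mathfrak n}\subseteq A^*_{\widetilde{\MI}\cap A^*}=B
\]
and then appeals to its Lemma~\ref{12.5.16}, which says that a local ring pinched between a domain $R$ and $R_{\MI}$ for a maximal ideal $\MI$ must equal $R_{\MI}$. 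You instead prove $B\subseteq A[\eta]_{\mathfrak n}$ directly by observing that every denominator $s\in A^*\setminus\widetilde{\MI}$ becomes invertible in $A[\eta]_{\mathfrak n}$, because $h'(\eta)s\in A[\eta]\setminus\mathfrak n$ and hence $s^{-1}=h'(\eta)/(h'(\eta)s)\in A[\eta]_{\mathfrak n}$. This is a genuinely different, and arguably slicker, use of the conductor element: it lets you bypass Lemma~\ref{12.5.16} entirely. The paper's route factors the same phenomenon through a reusable sandwich lemma, which is convenient if the abstraction is wanted elsewhere; yours keeps the whole argument self-contained. The ``in particular'' bookkeeping is identical in both.
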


Another important question is whether we can replace the element $1/u$ obtained in Theorem \ref{main2} by henselian elements. This was answered by Roquette in \cite{Roqp} for the valuative case:

\begin{Teo}\label{main3}
With assumptions and notations as in Theorem \ref{main1} and the extra assumption that $A$ is a valuation ring we have that for every element $b\in B$ there exist henselian elements $r,s\in B$ such that $b\in A[\eta,r,s]$.
\end{Teo}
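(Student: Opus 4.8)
The plan is to reduce everything, via Theorem \ref{main2}, to inverting a single element of $A[\eta]$, and then to realise that inverse using only one further henselian element, which is produced after multiplying the element by a suitable polynomial in $\eta$ so that it comes into ``general position'' with respect to the (finitely many) extensions of the valuation to $F$.

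Write $\overline{L}$, $\overline{F}$ for the residue fields of $A$, $B$; since $F$ lies in the inertia field, $\overline{F}\,|\,\overline{L}$ is finite and separable and $vF=vL$. By the ``in particular'' part of Theorem \ref{main2} there is a unit $u$ of $B$ lying in $A[\eta]$ with $b\in A[\eta,1/u]$, so it suffices to find a henselian element $r\in B$ over $A$ with $1/u\in A[\eta,r]$ (one may then take $s=\eta$). First I would record two consequences of Theorems \ref{main1} and \ref{main2}: since $h'(\eta)$ is a unit of $B$, the residue $\overline{\eta}$ is a \emph{simple} root of $\overline{h}$; and since $\mathfrak n=A[\eta]\cap\widetilde{\MI}$ is a maximal ideal of the integral $A$-algebra $A[\eta]$ and $B=A[\eta]_{\mathfrak n}$, we get $\overline{F}=B/\mathfrak nB=A[\eta]/\mathfrak n=\overline{L}(\overline{\eta})$, i.e.\ $\overline{\eta}$ generates $\overline{F}\,|\,\overline{L}$.

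The core of the argument is the claim: \emph{there is $g\in A[X]$ with $g(\eta)$ a unit of $B$ such that, putting $u'=u\,g(\eta)\in A[\eta]$ and letting $\mu\in A[X]$ be the monic minimal polynomial of $u'$ over $L$, the residue $\overline{u'}$ is a simple root of $\overline{\mu}$.} To prove it, fix the extension of $v$ to $L^{\mathrm{sep}}$ given by $\widetilde{\MI}$ and let $\eta=\sigma_1(\eta),\dots,\sigma_n(\eta)$ be the conjugates of $\eta$ over $L$; their residues $\beta_1=\overline{\eta},\beta_2,\dots,\beta_n$ are the roots of $\overline{h}$ (with multiplicity), and, writing $u=p(\eta)$, the residues of the conjugates of $u'$ are $\bar p(\beta_i)\bar g(\beta_i)$. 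Because $\beta_1$ is a \emph{simple} root of $\overline{h}$, every conjugate $\sigma_i(u')\neq u'$ has $\beta_i\neq\beta_1$; hence $\overline{u'}$ is a simple root of $\overline{\mu}$ as soon as $\bar g\in\overline{L}[X]$ is chosen so that $\gamma:=\bar p(\beta_1)\bar g(\beta_1)$ is a primitive element of $\overline{F}\,|\,\overline{L}$ — in particular $\gamma\neq 0$, so $\overline{u'}=\gamma$ and $u'$ is a unit of $B$ — and so that $\bar p(\beta_i)\bar g(\beta_i)\neq\gamma$ whenever $\beta_i\neq\beta_1$. Such a $\bar g$ exists: by the Chinese Remainder Theorem applied to $\overline{L}[X]/(\overline{h}_{\mathrm{red}})$, with $\overline{h}_{\mathrm{red}}$ the product of the distinct irreducible factors of $\overline{h}$, one may prescribe freely the value of $\bar g$ at one representative of each $\overline{L}$-conjugacy class of roots of $\overline{h}$; the first condition constrains only the value at $\beta_1$ and is satisfiable since $\bar p(\beta_1)\neq 0$ and $\overline{\eta}$ generates $\overline{F}$; the second condition is then automatic for the $\overline{L}$-conjugates of $\beta_1$, and for each remaining class it asks the chosen value to avoid the finitely many values producing $\gamma$ or an $\overline{L}$-conjugate of it, which is possible because each residue field $\overline{L}(\beta_i)$ has more than $[\overline{L}(\beta_i):\overline{L}]$ elements. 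Lifting $\bar g$ coefficientwise to $g\in A[X]$ proves the claim.

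Granting the claim, $\mu'(u')\in B$ has nonzero residue $\overline{\mu}'(\overline{u'})$ and is therefore a unit of $B$; hence the reciprocal polynomial $H(X)=X^{\deg\mu}\mu(1/X)\in A[X]$ satisfies $H(1/u')=0$, while a short computation using $\mu(u')=0$ shows that $H'(1/u')$ is a unit of $B$ times $\mu'(u')$, so $r:=1/u'\in B$ is a henselian element over $A$. Finally $1/u=g(\eta)\cdot(1/u')=g(\eta)\,r$, whence $b\in A[\eta,1/u]\subseteq A[\eta,r]\subseteq A[\eta,r,s]$. (Thus in fact a single extra henselian element already suffices.) The hard part is the combinatorial choice of $g$ in the claim: it is genuinely delicate when $\overline{L}$ is small and $v$ has several extensions to $F$ — equivalently $\overline{h}$ is reducible, the ``partly decomposed'' case — which is exactly why one cannot simply invoke the primitive element theorem but must spread apart the residues of the conjugates of $u$ by hand.
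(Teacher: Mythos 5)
Your proof is correct and reaches the conclusion by a genuinely different route. The paper (following Roquette) sets $b'=b-f(\eta)+\eta$ so that $b'v=\vartheta$, uses the approximation theorem for the finitely many incomparable extensions $w\in W$ of $v|_L$ to $F$ to produce $c\in\VR_F$ with $cv=1$ and $w(b'c)\neq 0$ for every $w\in W$, and concludes that $r=b'c$ and $rc$ are henselian by Lemma~\ref{Lroq}\textbf{(ii)}, whence $s=(rc)^{-1}$ is henselian by Lemma~\ref{Roq1} and $b'=r^2s$. You instead invoke Theorem~\ref{main2} to reduce to inverting a single $u\in A[\eta]$, and perform the approximation at the residue level — a Chinese Remainder argument in $\overline L[X]/(\overline h_{\mathrm{red}})$ rather than on $F$ itself — to multiply $u$ by a unit $g(\eta)$ so that $u'=ug(\eta)$ has residue a simple nonzero root of the reduction of its minimal polynomial, making $u'$ and then $1/u'$ henselian by the direct minimal-polynomial criterion. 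The two arguments are parallel (each is an approximation argument, one at the level of conjugate valuations on $F$ and the other at the level of conjugate residues), but yours has the small bonus of exhibiting a single extra henselian element beyond $\eta$. Two points worth spelling out if you write this up: first, for a class representative $\beta\neq\beta_1$ with $\bar p(\beta)\neq 0$, the bad values of $\bar g(\beta)$ are $\delta/\bar p(\beta)$ with $\delta$ a conjugate of $\gamma$ lying in $\overline L(\beta)$, and any such $\delta$ gives an $\overline L$-embedding $\overline F\hookrightarrow\overline L(\beta)$, so there are at most $[\overline F:\overline L]\le[\overline L(\beta):\overline L]<|\overline L(\beta)|$ of them — this is the count your argument relies on; second, primitivity of $\gamma$ is doing double duty, both guaranteeing $\gamma\neq 0$ and making condition (b) automatic at the $\overline L$-conjugates of $\beta_1$, where $\bar g$ is no longer free to choose.
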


A natural question is whether the elements $\eta,r,s$ can be chosen independently of the element $b\in B$, in particular, is $B$ a finitely generated $A$-algebra? We show that this is not always true, even for the valuative case. Namely, we prove the following:

\begin{Teo}\label{counterexample}
There exists a finite valued inertial extension $(F|L,v)$ such that $\VR_F$ is not a finitely generated $\VR_L$-algebra.
\end{Teo}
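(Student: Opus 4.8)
The plan is to construct $(F|L,v)$ explicitly so that $\VR_F$ requires infinitely many generators over $\VR_L$, exploiting the fact that a finitely generated algebra over $\VR_L$ can only "see" finitely many of the prime ideals of $\VR_L$ in a strong sense. I would look for a base field $L$ carrying a valuation $v$ whose value group $vL$ has a non-finitely-generated structure — for instance $vL \subseteq \Q$ dense, or $vL = \bigoplus_{n} \Z$ with a lexicographic-type ordering — so that $\VR_L$ has an infinite chain (or infinite poset) of prime ideals, while the residue field $Lv$ has nontrivial inertial extensions available (e.g.\ $Lv$ not separably closed, so there is a monic separable $\overline{h}\in Lv[X]$ with a root generating a degree-$d$ extension). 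Then take $F=L(\eta)$ where $\eta$ lifts a root of $\overline{h}$ — a single inertial generator, so $[F:L]=d<\infty$ and $F\subseteq L^i$, hence $(F|L,v)$ is a finite valued inertial extension and Theorems \ref{main1}--\ref{main3} all apply.

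The heart of the construction is to arrange that $\VR_F = \VR_L[\eta][\,1/u : u\in S\,]$ for an \emph{infinite} multiplicatively-closed set $S$ of units of $\VR_F$ that cannot be generated by finitely many of its members together with $\VR_L[\eta]$. Concretely, by Theorem \ref{main2} we know $\VR_F = \VR_L[\eta]_{\mathfrak n}$ with $\mathfrak n = \VR_L[\eta]\cap\widetilde{\MI}$, so the obstruction is precisely that localizing $\VR_L[\eta]$ at the single prime $\mathfrak n$ is not achieved by inverting any finite list of elements. I would engineer the value group so that there are elements $a_1, a_2, \ldots \in \VR_L$ with $v(a_1) > v(a_2) > \cdots > 0$ cofinal approaching $0$ from above (a coinitial sequence in the positive part of $vL$), forcing $\VR_L$ — and hence $\VR_L[\eta]$, which is finite over $\VR_L$ — to have a prime ideal $\mathfrak p = \{x : v(x) > \gamma \ \forall \gamma\}$ that is not the radical of any finitely generated ideal; equivalently $\VR_F$, being a localization, is not obtainable from $\VR_L[\eta]$ by inverting finitely many elements. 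One clean way: take $vL$ to be the group $\Z^{(\N)}$ ordered so that $e_0 > e_1 > e_2 > \cdots > 0$, or $vL = \Q$ with the $a_i$ chosen so $v(a_i)\to 0^+$; the positive cone then has no finite coinitial subset, which is exactly what defeats finite generation.

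Having fixed the construction, the argument is: suppose for contradiction $\VR_F = \VR_L[\eta, b_1, \ldots, b_m]$. Each $b_j = p_j/q_j$ with $p_j\in\VR_L[\eta]$ and $q_j\in\VR_L[\eta]\setminus\mathfrak n$; then $\VR_F \subseteq \VR_L[\eta][1/q]$ where $q = q_1\cdots q_m$, so $\VR_L[\eta]_{\mathfrak n} = \VR_L[\eta][1/q] = \VR_L[\eta]_{q}$ (the localization at the powers of $q$). This would force every prime of $\VR_L[\eta]$ contained in $\mathfrak n$ to avoid $q$, i.e.\ $\mathfrak n$ would be minimal over $(q)$ among primes it contains — but by contracting to $\VR_L$ (using that $\VR_L[\eta]$ is integral over $\VR_L$ and $\VR_L$ is a valuation ring, so primes are linearly ordered and lying-over/going-down behave well) this contradicts the non-existence of a finite coinitial set in the positive cone of $vL$: the element $q$ has some value $\gamma = v(\mathrm{Nm}(q))$ or similar, $\gamma>0$ since $q\notin\mathfrak n$ forces $q\notin\widetilde{\MI}$ only in residue, and the primes $\mathfrak p_\gamma$ strictly between $\mathfrak p$ and $\mathfrak n$'s contraction separate $q$ from $\mathfrak p$. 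I expect the main obstacle to be the bookkeeping in this last step — correctly transferring the "no finite coinitial set" property of $vL^{>0}$ through the integral extension $\VR_L\subseteq\VR_L[\eta]$ and the localization, and making sure the prime $\mathfrak n$ genuinely fails to be the radical of a principal ideal. A secondary concern is exhibiting a \emph{concrete} valued field $(L,v)$ with the required value group together with a residue field admitting the needed inertial extension; a safe choice is $Lv = \mathbb{F}_p$ or $\Q$ (so e.g.\ $X^2-c$ works for suitable residue $c$) and $(L,v)$ a generalized power series or Hahn-type field over it with value group $\Q$ or $\Z^{(\N)}$ lex-ordered as above, for which all the valuation-theoretic facts (existence of the inertia extension, Theorems \ref{main1}--\ref{main2}) are standard.
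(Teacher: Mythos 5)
Your proposal does not work as stated, and the gaps are substantive.

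First, the concrete value groups you propose are fatal to the construction. If $vL=\Q$ (or any dense subgroup of $\R$), then $\VR_L$ has rank $1$: there are exactly two prime ideals, $(0)$ and $\MI_L$, hence the chain of primes is trivially well-ordered, and Theorem \ref{main5} then \emph{forces} $\VR_F$ to be a finitely generated $\VR_L$-algebra. Your ``coinitial sequence $v(a_i)\to 0^+$'' lives entirely inside the unique nonzero prime and produces no new prime ideals; density of the value group is simply orthogonal to the relevant obstruction. The ordering $e_0>e_1>\cdots$ on $\Z^{(\N)}$ is also wrong: the proper convex subgroups are the tails $\langle e_n,e_{n+1},\dots\rangle$ and $\{0\}$, so there \emph{is} a maximal proper convex subgroup (namely $\langle e_1,e_2,\dots\rangle$) and hence a minimal nonzero prime, which again is what must be avoided. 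The correct order-theoretic condition is: $\VR_L$ should have \emph{no minimal nonzero prime ideal}, i.e.\ $vL$ should have no maximal proper convex subgroup. That is the content of the well-ordering hypothesis in Theorem \ref{main5} that you need to violate, and it is a statement about the convex-subgroup lattice, not about finite generation of $vL$ as an abelian group.

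Second, and more fundamentally, even ``no minimal nonzero prime'' is not enough to run the contradiction. The paper's argument needs, for every nonzero prime $\PI$ of $\VR_L$, a prime $\QI$ of $\VR_L[\eta]$ lying over $\PI$ with $\QI\not\subseteq\NI=\MI_F\cap\VR_L[\eta]$. To produce such a $\QI$, the minimal polynomial $h$ of $\eta$ must factor nontrivially modulo $\PI$, peeling off the ``$X^l$'' part of $hv=X^l g$. This requires a Hensel-type lifting at the intermediate level, and it is made to work by the extra hypothesis that $(Lv_\PI,\overline v_\PI)$ is \emph{henselian} for every nonzero $\PI$ while $(L,v)$ itself is \emph{not} henselian. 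Without the henselianness at all intermediate levels, $h$ can remain irreducible mod $\PI$; then the unique prime of $\VR_L[\eta]$ over $\PI$ is $\PI\VR_L[\eta]\subseteq\NI$, and the whole argument collapses. The paper therefore cites a nontrivial prior construction (Proposition~4 of \cite{FVKD}, reproduced here as Proposition \ref{KCE}) of a valued field that is not henselian but whose quotients $Lv_\PI$ are all henselian and which has no minimal nonzero prime. Your sketch never mentions, and gives no way to achieve, this residue-field henselianness, so it cannot be completed along the intended lines. This is not a bookkeeping issue that can be patched at the end; it is the central input to the construction.
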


The next theorem gives a list of equivalent conditions under which $\VR_F$ is a finitely generated $\VR_L$-algebra.

\begin{Teo}\label{main4}
Let $(F|L,v)$ be a finite inertial extension. Let $\eta$ be the element obtained in Theorem \ref{main1}. We set $\mathfrak n:=\MI_F\cap \VR_L[\eta]$. Then the following conditions are equivalent:
\begin{description}
\item[(i)] $\VR_F$ is a finitely generated $\VR_L$-algebra;
\item[(ii)] there exists a unit $u$ of $\VR_F$ in $\VR_L[\eta]$ such that $\VR_F=\VR_L[\eta,1/u]$;
\item[(iii)] there are henselian elements $r,s\in\VR_F$ such that $\VR_F=\VR_L[\eta,r,s]$;
\item[(iv)] there exists an element $u\in \VR_L[\eta]\setminus\mathfrak n$ such that $u$ belongs to every prime ideal of $\VR_L[\eta]$ not contained in $\mathfrak n$, i.e.,
\[
\bigcap_{\PI\in \mathcal S}\PI\not\subseteq \mathfrak n,
\]
where $\mathcal S=\{\PI\in \Sp(\VR_L[\eta])\mid\PI\not\subseteq \mathfrak n\}$;
\item[(v)] for every chain of prime ideals $(\PI_\lambda)_{\lambda\in \Lambda}$ of $\VR_L[\eta]$, if $\PI_\lambda\not\subseteq \mathfrak n$ for every $\lambda\in \Lambda$, then
\[
\bigcap_{\lambda\in \Lambda}\PI_\lambda\not\subseteq\mathfrak n.
\]
\end{description}
\end{Teo}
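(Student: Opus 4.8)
The plan is to prove the web of implications (i)$\Leftrightarrow$(ii), (ii)$\Rightarrow$(iii)$\Rightarrow$(i), (ii)$\Leftrightarrow$(iv) and (iv)$\Leftrightarrow$(v). Write $R:=\VR_L[\eta]$. By Theorem \ref{main1} the minimal polynomial $h$ of $\eta$ is monic of degree $n:=[F:L]$ with coefficients in $\VR_L$, so $R$ is a free $\VR_L$-module of rank $n$; in particular $R$ is integral over $\VR_L$ and is a domain with $\QF(R)=F$. By Theorem \ref{main2}, $\VR_F=R_{\mathfrak n}$; hence for $u\in R$ the element $u$ is a unit of $\VR_F$ precisely when $u\notin\mathfrak n$, and in that case $\VR_L[\eta,1/u]=R[1/u]$ as subrings of $F$.

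I would dispatch (i)$\Rightarrow$(ii) as follows: write $\VR_F=\VR_L[\eta,b_1,\dots,b_k]$ and, using $\VR_F=R_{\mathfrak n}$, put $b_i=c_i/d_i$ with $c_i\in R$ and $d_i\in R\setminus\mathfrak n$; then $u:=d_1\cdots d_k\in R\setminus\mathfrak n$ and $\VR_F\subseteq R[1/u]\subseteq R_{\mathfrak n}=\VR_F$, so (ii) holds. The implication (ii)$\Rightarrow$(iii) is immediate from Theorem \ref{main3} applied to $b=1/u$: it yields henselian $r,s\in\VR_F$ with $1/u\in\VR_L[\eta,r,s]$, so $\VR_F=\VR_L[\eta,1/u]\subseteq\VR_L[\eta,r,s]\subseteq\VR_F$; and (iii)$\Rightarrow$(i) is trivial. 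For (ii)$\Leftrightarrow$(iv) I would first record the routine localization fact that, for $u\in R\setminus\mathfrak n$, one has $R[1/u]=R_{\mathfrak n}$ if and only if every $d\in R\setminus\mathfrak n$ divides a power of $u$ in $R$, equivalently $u$ lies in every prime of $R$ that contains some element of $R\setminus\mathfrak n$. Since a prime $\PI$ contains an element of $R\setminus\mathfrak n$ exactly when $\PI\not\subseteq\mathfrak n$, and since (ii) just asserts the existence of a unit $u$ of $\VR_F$ in $R$, i.e.\ of a $u\in R\setminus\mathfrak n$, with $R[1/u]=R_{\mathfrak n}$, this reformulation is verbatim condition (iv).

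It remains to treat (iv)$\Leftrightarrow$(v). The direction (iv)$\Rightarrow$(v) is clear: a witness $u$ for (iv) lies in $\bigcap_\lambda\PI_\lambda$ for every chain $(\PI_\lambda)$ inside $\mathcal S:=\{\PI\in\Sp(R)\mid\PI\not\subseteq\mathfrak n\}$ while $u\notin\mathfrak n$. For (v)$\Rightarrow$(iv) the key point I would isolate is that $\Sp(R)$ is a union of at most $n$ chains. Granting this, write $\Sp(R)=\mathcal C_1\cup\cdots\cup\mathcal C_g$ with $g\le n$ and each $\mathcal C_j$ a chain; if $\mathcal S=\emptyset$ then $R=\VR_F$ and all the conditions hold trivially, so assume $\mathcal S\neq\emptyset$ and discard the $\mathcal C_j$ disjoint from $\mathcal S$. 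Each $\mathcal S\cap\mathcal C_j$ is a chain contained in $\mathcal S$, so (v) gives $I_j:=\bigcap_{\PI\in\mathcal S\cap\mathcal C_j}\PI\not\subseteq\mathfrak n$; picking $u_j\in I_j\setminus\mathfrak n$, the product $u:=u_1\cdots u_g$ lies in $\bigcap_j I_j=\bigcap_{\PI\in\mathcal S}\PI$ but not in $\mathfrak n$, which is (iv). To prove the claim I would pass to the integral closure $\bar R$ of $\VR_L$ in $F$: since $\eta$ is a root of the monic $h\in\VR_L[X]$ we have $R\subseteq\bar R$, an integral extension, so $\Sp(\bar R)\to\Sp(R)$ is surjective and order preserving, hence maps chains to chains; thus it suffices to cover $\Sp(\bar R)$ by finitely many chains. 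Here one invokes classical valuation theory: $\bar R$ is semilocal, its maximal ideals $\mathfrak M_1,\dots,\mathfrak M_g$ (with $g\le[F:L]=n$) corresponding to the distinct extensions of $v$ to $F$, and each localization $\bar R_{\mathfrak M_i}$ is the valuation ring $\mathcal W_i$ of the $i$-th extension; since every prime of $\bar R$ lies in some $\mathfrak M_i$, we get $\Sp(\bar R)=\bigcup_{i=1}^g\Sp(\bar R_{\mathfrak M_i})=\bigcup_{i=1}^g\Sp(\mathcal W_i)$, a union of $g$ chains because each $\mathcal W_i$ is a valuation ring.

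The step I expect to be the main obstacle is precisely the assertion that $\Sp(\VR_L[\eta])$ is a finite union of chains: this is where the finiteness of $[F:L]$ — and the fact that $\VR_L$ is a \emph{valuation} ring, so that $\bar R$ is a semilocal Prüfer domain whose localizations at maximal ideals are valuation rings and whose maximal spectrum is finite — enters in an essential, non-formal way. Once this structural fact is available, the rest of the argument is either elementary localization bookkeeping or a direct appeal to Theorems \ref{main1}--\ref{main3}.
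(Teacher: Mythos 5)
Your proof is correct and follows essentially the same roadmap as the paper: (i)\,$\Rightarrow$\,(ii) via Theorem~\ref{main2}, (ii)\,$\Rightarrow$\,(iii) via Theorem~\ref{main3}, (iii)\,$\Rightarrow$\,(i) trivially, (ii)\,$\Leftrightarrow$\,(iv) via a localization criterion (the paper's Lemma~\ref{Loc}), (iv)\,$\Rightarrow$\,(v) by taking intersections, and (v)\,$\Rightarrow$\,(iv) using the structural fact that $\Sp(\VR_L[\eta])$ is a finite union of chains. The one place you organize things slightly differently is in establishing that structural fact: the paper isolates it as Proposition~\ref{finchain} and works with the integral closure of $\VR_L$ in the \emph{Galois hull} $F'$ of $F$, using Lemma~\ref{trans} (conjugacy of primes in a normal extension) to get semilocality; you instead pass to the integral closure $\bar R$ of $\VR_L$ in $F$ itself and appeal directly to the classical structure theorem (finitely many extensions of $v$ to $F$, $\bar R$ semilocal Pr\"ufer with $\bar R_{\mathfrak M_i}$ equal to the corresponding valuation rings). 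Both routes are standard; the paper's detour through the Galois hull makes the semilocality self-contained from Lemma~\ref{trans} and \cite{Fuch}, whereas your version stays in $F$ at the cost of citing slightly more of the classical theory. The rest of your argument --- including the bookkeeping with the chains $\mathcal C_j$, the treatment of $\mathcal S=\emptyset$, and the use of primality of $\mathfrak n$ to keep $u_1\cdots u_g\notin\mathfrak n$ --- matches the paper's proof.
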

\noindent
For the proof of this theorem we need the fact that $\Sp(\VR_L[\eta])$ consists of finitely many chains of prime ideals, which we prove in Proposition~\ref{finchain}. The assertion of that proposition holds more generally; a criterion for the spectrum of an integral extension $S$ of an integrally closed domain $R$ to consist of finitely many chains of prime ideals is given in the appendix.

\par\medskip
In the next theorem, we give a valuation theoretical condition on the valued field $(L,v)$ which implies the above conditions.

\begin{Teo}\label{main5}
Let $(F|L,v)$ be a finite valued inertial extension. Assume also that the chain of prime ideals of $\VR_L$ is well-ordered by inclusion. Then the equivalent conditions \mbox{\rm \textbf{(i) - (v)}} of Theorem \ref{main4} are satisfied.
\end{Teo}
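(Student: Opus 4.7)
The plan is to verify condition \textbf{(v)} of Theorem~\ref{main4} directly; since conditions (i)--(v) are equivalent, this immediately yields the theorem.

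First I would take a chain $(\PI_\lambda)_{\lambda\in\Lambda}$ of prime ideals of $\VR_L[\eta]$ with $\PI_\lambda\not\subseteq\mathfrak n$ for every $\lambda$, and contract it to $\VR_L$: set $\QI_\lambda := \PI_\lambda\cap\VR_L$. Because the polynomial $h(X)$ supplied by Theorem~\ref{main1} is monic with coefficients in $\VR_L$, the ring $\VR_L[\eta]$ is integral over $\VR_L$. The incomparability theorem for integral extensions therefore implies that on any chain of primes of $\VR_L[\eta]$, the contraction map to $\Sp(\VR_L)$ is strictly order-preserving: $\PI_\lambda\subsetneq\PI_\mu$ if and only if $\QI_\lambda\subsetneq\QI_\mu$.

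Now the well-ordering hypothesis does the rest of the work. The $\QI_\lambda$ form a chain in $\Sp(\VR_L)$, hence a subset of a well-ordered set, so it has a least element $\QI_{\lambda_0}$. Transporting this back up through the order-isomorphism from the previous step, $\PI_{\lambda_0}$ must be the least element of the chain $(\PI_\lambda)_{\lambda\in\Lambda}$, so
\[
\bigcap_{\lambda\in\Lambda}\PI_\lambda \;=\; \PI_{\lambda_0} \;\not\subseteq\; \mathfrak n,
\]
since $\PI_{\lambda_0}$ itself is one of the $\PI_\lambda$. This is exactly condition \textbf{(v)}.

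I foresee no real obstacle: the only non-elementary input is the incomparability theorem, which is a classical fact about integral ring extensions. What makes the argument so clean is that well-ordering of $\Sp(\VR_L)$ is essentially the statement that the infimum of any chain of primes is attained, and integrality propagates this property to $\Sp(\VR_L[\eta])$ chain by chain; so in particular the finiteness of chains in $\Sp(\VR_L[\eta])$ provided by Proposition~\ref{finchain} is not needed for this implication.
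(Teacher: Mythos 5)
Your proof is correct and follows essentially the same route as the paper: contract the chain to $\VR_L$, use incomparability for the integral extension $\VR_L\subseteq\VR_L[\eta]$ to see the contraction is an order-isomorphism onto its image, invoke well-ordering to extract a least element, and lift it back. Your closing remark that Proposition~\ref{finchain} is not needed to verify condition \textbf{(v)} is also accurate; the paper's proof of Theorem~\ref{main5} does not use it either.
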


Theorem \ref{main5} is a generalization of part (3) in the main proposition of \cite{Vdr}. There, van den Dries establishes the result when the chain of prime ideals of $\VR_L$ is finite.

The following theorem relates the local uniformization problem with the theory of henselian elements.

\begin{Teo}\label{Teo_4}
Let $(F|K,v)$ be a valued function field such that $v$ is trivial on $K$. Assume that for every finite set $Z\subseteq \VR_F$ there exists a transcendence basis $T$ of $F|K$ and elements $\eta_1,\ldots,\eta_r\in \VR_F$ which are henselian over $K(T)$ such that $(K(T)|K,v)$ admits local uniformization and $Z\subseteq \VR_{K(T)}[\eta_1,\ldots,\eta_r]$. Then $(F|K,v)$ admits local uniformization.
\end{Teo}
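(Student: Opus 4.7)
The plan is to bootstrap the assumed local uniformization on $(K(T)|K,v)$ up to $(F|K,v)$, using Proposition~\ref{Spiv} to transfer regularity through the henselian adjunction.

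Fix a finite set $Z\subseteq\VR_F$. First I would enlarge $Z$ to include a set of field-generators of $F|K$ lying in $\VR_F$, which is possible because every $a\in F\setminus\VR_F$ satisfies $1/a\in\MI_F\subseteq\VR_F$. Applying the hypothesis to this enlarged set produces a transcendence basis $T$, henselian elements $\eta_1,\dots,\eta_r\in\VR_F$ over $\VR_{K(T)}$ with henselian polynomials $h_i(X)\in\VR_{K(T)}[X]$, local uniformization for $(K(T)|K,v)$, and $Z\subseteq\VR_{K(T)}[\eta_1,\dots,\eta_r]$; the enlargement forces $F=K(T)(\eta_1,\dots,\eta_r)$.

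Next, I would form a finite set $Z'\subseteq\VR_{K(T)}$ containing all coefficients of all $h_i$ together with the coefficients in $\VR_{K(T)}$ needed to express each $z\in Z$ as a polynomial in the $\eta_i$. Invoking local uniformization for $(K(T)|K,v)$ on $Z'$ yields an affine model $V'=\Spec R'$ with regular center $\PI'=R'\cap\MI_{K(T)}$ satisfying $Z'\subseteq R'_{\PI'}$. I would then set $R:=R'[\eta_1,\dots,\eta_r]$ and $\PI:=R\cap\MI_F$. One checks that $R$ is a finitely generated $K$-subalgebra of $\VR_F$ with $\mathrm{Frac}(R)=F$ and that $\PI\cap R'=\PI'$, so $V:=\Spec R$ is an affine model of $(F|K,v)$ with center $\PI$ and $R'_{\PI'}\subseteq R_\PI$.

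By construction the coefficients of the $h_i$ lie in $R'_{\PI'}$, and $h_i'(\eta_i)$, being a unit of $\VR_F$, sits outside $\PI$ and is therefore a unit of $R_\PI$; so each $\eta_i$ is henselian over $R'_{\PI'}$ with value in $R_\PI$. Since $R_\PI$ is a localization of $R'_{\PI'}[\eta_1,\dots,\eta_r]$, Proposition~\ref{Spiv} yields that $R_\PI$ is regular, while $Z\subseteq R'_{\PI'}[\eta_1,\dots,\eta_r]\subseteq R_\PI$ by the choice of $Z'$. This is exactly the local uniformization of $(F|K,v)$ at the set $Z$.

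The main obstacle I anticipate is organizational rather than conceptual: $Z'$ must be engineered to absorb every datum required at the $F$-level (coefficients of the $h_i$, the coefficients appearing in the representations of the elements of $Z$, and the field-generators added at the start) so that Proposition~\ref{Spiv} may genuinely be applied to the pair $R'_{\PI'}\subseteq R_\PI$. A secondary subtlety is the interpretation of ``henselian over $K(T)$'' in the hypothesis: for the argument to run smoothly one wants the henselian polynomials to lie in $\VR_{K(T)}[X]$, in line with the valuation-theoretic convention used elsewhere in the paper.
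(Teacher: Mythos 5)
Your argument is correct and follows essentially the same route as the paper's own proof: enlarge $Z$ to contain generators of $F|K$, gather the coefficients of the henselian polynomials and of the representations of the elements of $Z$ into a finite set $Z'\subseteq\VR_{K(T)}$, uniformize $(K(T)|K,v)$ at $Z'$, adjoin the $\eta_i$ to the resulting regular local ring, and invoke Proposition~\ref{Spiv} to preserve regularity. Your remark about reading ``henselian over $K(T)$'' as ``henselian over $\VR_{K(T)}$'' (so that the $h_i$ lie in $\VR_{K(T)}[X]$ and their coefficients may be placed in $Z'$) is exactly the interpretation the paper's proof relies on.
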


As a consequence of the theorems above we obtain the following:

\begin{Teo}\label{Teo_5}
Let $(F|K,v)$ be a valued function field such that $v$ is trivial on $K$. Assume that there exists a transcendence basis $T$ of $F|K$ such that $(K(T)|K,v)$ admits local uniformization and $F\subseteq K(T)^i$. Then $(F|K,v)$ admits local uniformization.
\end{Teo}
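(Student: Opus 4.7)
The plan is to derive Theorem \ref{Teo_5} as a direct application of Theorem \ref{Teo_4}, using Theorems \ref{main1} and \ref{main3} to produce the required henselian elements. Fix the transcendence basis $T$ supplied by the hypothesis. Since $F|K$ is a function field and $T$ is a transcendence basis, the extension $F|K(T)$ is finite; combined with $F\subseteq K(T)^i$, this makes $(F|K(T),v)$ a finite valued inertial extension, which is exactly the setting of Theorems \ref{main1} and \ref{main3}.

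First I would apply Theorem \ref{main1} with $L=K(T)$ and $A=\VR_{K(T)}$ to obtain an element $\eta\in\VR_F$ such that $F=K(T)(\eta)$ and $\eta$ is henselian over $\VR_{K(T)}$. Next, given any finite set $Z\subseteq\VR_F$, I would invoke Theorem \ref{main3} for each $z\in Z$ (applicable because $\VR_{K(T)}$ is a valuation ring) to find henselian elements $r_z,s_z\in\VR_F$ with $z\in\VR_{K(T)}[\eta,r_z,s_z]$. Taking the union over the finite set $Z$ produces a finite list consisting of $\eta$ together with $\{r_z,s_z\}_{z\in Z}$, each henselian over $\VR_{K(T)}$, whose joint $\VR_{K(T)}$-algebra contains $Z$.

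Combined with the hypothesis that $(K(T)|K,v)$ admits local uniformization, this verifies the assumptions of Theorem \ref{Teo_4} with the \emph{same} transcendence basis $T$ for every finite $Z\subseteq\VR_F$, and Theorem \ref{Teo_4} then yields local uniformization for $(F|K,v)$. Since the argument is essentially an assembly of previously established results, no genuine obstacle is expected; the only point worth keeping track of is that, although $T$ and $\eta$ may be chosen once and for all, the auxiliary henselian elements $r_z,s_z$ genuinely depend on $Z$, which is precisely the flexibility that the formulation of Theorem \ref{Teo_4} permits.
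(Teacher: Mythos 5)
Your proof is correct and takes essentially the same approach as the paper: both reduce to Theorem \ref{Teo_4} by producing, for each finite $Z\subseteq\VR_F$, a finite list of henselian elements over $\VR_{K(T)}$ whose generated $\VR_{K(T)}$-algebra contains $Z$. The only cosmetic difference is that the paper first invokes Theorem \ref{main2} to bundle all of $Z$ into $\VR_{K(T)}[\eta,1/u]$ and then applies Theorem \ref{main3} once to the single element $1/u$, yielding just three henselian elements $\eta,r,s$, whereas you apply Theorem \ref{main3} separately to each $z\in Z$ and collect the results; both routes verify the hypothesis of Theorem \ref{Teo_4} with the fixed transcendence basis $T$.
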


In \cite{KK_1}, Knaf and the first author proved a version of Theorem \ref{Teo_5} without assuming that $v$ is trivial on $K$. They use the theory of local \'etale extensions and classical results from algebraic geometry. The advantage of our proof is that it is simpler and uses only tools from valuation theory. The case where $v$ is not trivial on $K$ will be treated in a subsequent paper.

\par\medskip
\textbf{Acknowledgements.} The authors wish to thank Peter Roquette and Lou van den Dries for the permission to use and publish their results, proved in \cite{Roqp} and \cite{Vdr}, respectively. They also thank Hagen Knaf for very helpful suggestions, and Mark Spivakovsky for interesting discussions and for providing a sketch for the proof of Proposition 4.1.

\section{Preliminaries}

We will assume that the reader is familiar with basic facts about valuations and will use them without further reference. For general valuation theory we recommend \cite{Pre}, \cite{Zar_6} and \cite{KVT}, and for basic commutative algebra we suggest \cite{Ath} and \cite{Mat}.

Let $(F,v)$ be a valued field and consider a prime ideal $\PI$ of $\VR_F$. Then the set
\[
\Delta_\PI:=\{\gamma\in v F\mid -va<\gamma<va \mbox{ for all } a\in \PI\}
\]
is a convex subgroup of $v F$. Define the valuation $v_\PI:F^\times\lra v F/\Delta_\PI$ on $F$ by $v_\PI=\pi_{\Delta_\PI}\circ v$ where $\pi_{\Delta_\PI}$ is the canonical projection of $v F$ onto $v F/\Delta_\PI$. We also define the valuation
\[
\overline v_\PI: Fv_\PI^\times \lra \Delta_\PI
\]
on $Fv_\PI=\VR_{v_\PI}/\MI_{v_\PI}$ by setting $\overline v_\PI(x+\MI_{v_\PI})=v(x)$ for $x\in\VR_{v_\PI}$. Then $Fv\simeq (Fv_\PI)\overline v_\PI$ and the valuations $v$ and $v_\PI\circ\overline v_\PI$ are equivalent, so we identify them.

Take a field $F$, a valuation $v$ on $F$ and a ring $R\subseteq F$ with $F=\QF(R)$. The valuation $v$ is said to \textbf{have a center on $R$} if $R\subseteq \VR_v$, i.e., if $v(\phi)\geq 0$ for every $\phi\in R$. The \textbf{center} $\PI$ of $v$ on $R$ is defined by
\[
\PI:=\{\phi\in R\mid v(\phi)>0\}=R\cap \MI_v.
\]
If $R$ is a local ring with maximal ideal $\MI$, then a valuation on $F$ is said to be \textbf{centered at $R$} if $\MI$ is the center of $v$ on $R$ (i.e., $(R,\MI)$ is dominated by $(\VR_v,\MI_v)$). It is easy to see that if $R$ is any ring with $R\subseteq \VR_v\subseteq F$ and $F=\QF(R)$, then $v$ is centered at $R_\PI$.

For a valued field $(F,v)$ and any subfield $L$ of $F$ we write
\[
\VR_L:=\VR_v\cap L\mbox{ \ and \ }\MI_L:=\MI_v\cap L.
\]
A \textbf{valued field extension} $(F|L,v)$ is a field extension $F|L$ together with a valuation on $F$. If the extension $F|K$ is an algebraic function field, then $(F|K,v)$ is called a \textbf{valued function field}.

We will prove now some basic lemmas which will be used in the sequel.
\begin{Lema}\label{Zorn}
Let $R$ be a commutative ring and take elements $f,g\in R$. Assume that $f^n\notin gR$ for every $n\in \N$. Then there exists a prime ideal $\QI$ of $R$ such that $g\in \QI$ and $f^n\notin \QI$ for every $n\in \N$. In particular, for every non-nilpotent element $f$ of $R$ there exists a prime ideal $\QI$ of $R$ such that $f\notin \QI$.
\end{Lema}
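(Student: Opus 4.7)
The plan is to apply Zorn's lemma to the family
\[
\mathcal{F} = \{\, I \trianglelefteq R \mid gR \subseteq I \text{ and } f^n \notin I \text{ for every } n \in \N \,\},
\]
ordered by inclusion, and then show that a maximal element of $\mathcal{F}$ is automatically prime. The hypothesis $f^n \notin gR$ for all $n$ says precisely that $gR \in \mathcal{F}$, so $\mathcal{F}$ is nonempty. Given a chain $(I_\lambda)$ in $\mathcal{F}$, the union $I = \bigcup_\lambda I_\lambda$ is an ideal containing $gR$, and no $f^n$ lies in $I$ (otherwise it would lie in some $I_\lambda$); thus $I \in \mathcal{F}$ is an upper bound. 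So Zorn's lemma yields a maximal element $\QI \in \mathcal{F}$, which automatically satisfies $g \in \QI$ and $f^n \notin \QI$ for all $n$.

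It remains to show $\QI$ is prime. Suppose $a, b \in R \setminus \QI$. Then $\QI + aR$ and $\QI + bR$ are ideals strictly containing $\QI$, and they both contain $gR$; by maximality of $\QI$ in $\mathcal{F}$, neither belongs to $\mathcal{F}$, so there exist $m, n \in \N$ with $f^m \in \QI + aR$ and $f^n \in \QI + bR$. Multiplying representatives, $f^{m+n} \in \QI + abR$, so $\QI + abR \notin \mathcal{F}$. Since $\QI + abR$ still contains $gR$, the only way it can fail to lie in $\mathcal{F}$ is that $ab \notin \QI$. Hence $\QI$ is prime, proving the first assertion.

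For the ``in particular'' statement, apply the first part with $g = 0$: the hypothesis $f^n \notin gR = \{0\}$ for every $n \in \N$ is exactly the assumption that $f$ is non-nilpotent, and the resulting prime ideal $\QI$ satisfies $f \notin \QI$ (since $f^1 \notin \QI$).

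I do not expect any real obstacle here; the only subtle point is checking that the maximal element of $\mathcal{F}$ is prime, which is the standard Krull-type argument using maximality to force $ab \notin \QI$ whenever $a, b \notin \QI$.
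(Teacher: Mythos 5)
Your proof is correct and follows essentially the same route as the paper's: apply Zorn's lemma to the family of ideals containing $gR$ and disjoint from $\{f^n\}$, then show a maximal element is prime via the standard multiplicativity argument. The paper phrases the primality step as a contradiction (assume $\alpha\beta\in\QI$ with $\alpha,\beta\notin\QI$ and derive $f^{m+n}\in\QI$), while you argue contrapositively through the ideal $\QI+abR$; these are the same calculation, and both are valid.
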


\begin{proof}
Define the set
\[
\mathcal S=\{I\subseteq R\mid I\textnormal{ is an ideal of }R,\ g\in I\textnormal{ and }I\cap \{f^n\mid n\in\N\}=\emptyset\}.
\]
By our assumption on $f$ and $g$, we have that $gR\in \mathcal S$. One can prove that the union over any chain of elements in $\mathcal S$ belongs to $\mathcal S$. Thus, we can apply Zorn's Lemma to obtain a maximal element $\mathfrak q$ for $\mathcal S$. We claim that $\mathfrak q$ is a prime ideal. Otherwise, there would exist elements $\alpha,\beta\in R\setminus \mathfrak q$ such that $\alpha\cdot\beta\in \mathfrak q$. By the maximality of $\mathfrak q$ in $\mathcal S$, we have that
\[
(\mathfrak q+\alpha R)\cap\{f^n\mid n\in\N\}\neq\emptyset\neq(\mathfrak q+\beta R)\cap\{f^n\mid n\in\N\}.
\]
Thus, $f^n=p+\alpha r$ and $f^m=q+\beta s$ for some $p,q\in\mathfrak q$, $r,s\in R$ and $m,n\in\N$. This means that
\[
f^{m+n}=(p+\alpha r)\cdot(q+\beta s)=p q+p\beta s+q\alpha r+\alpha\beta r s\in\mathfrak q,
\]
which is a contradiction to $\QI\in\mathcal S$.

For the last statement, we just apply the previous part for $g=0$.

\end{proof}

\begin{Lema}\label{Loc}
Let $R$ be a domain, $\PI$ a prime ideal and $\phi$ an element of $R$ such that $\phi\notin\PI$. Then $R_\phi=R_{\PI}$ (as subsets of $\QF(R)$) if and only if $\phi\in \mathfrak q$ for every prime ideal $\mathfrak q$ of $R$ such that $\mathfrak q\not\subseteq \PI$.
\end{Lema}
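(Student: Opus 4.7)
The plan is to observe that one inclusion, namely $R_\phi\subseteq R_\PI$, is automatic: since $\phi\notin\PI$ and $\PI$ is prime, every power $\phi^n$ lies in $R\setminus\PI$, so the multiplicative set $\{\phi^n\mid n\in\N\}$ is contained in $R\setminus\PI$ and inverting the smaller set yields a smaller ring inside $\QF(R)$. Consequently the equivalence in the lemma really concerns the reverse inclusion $R_\PI\subseteq R_\phi$, and I would rephrase it in purely ring-theoretic terms: since $R$ is a domain, $1/s\in R_\phi$ (for $s\in R$) holds if and only if $s$ divides some power of $\phi$ in $R$, i.e.\ $\phi^n\in sR$ for some $n\in\N$. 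So what must be shown is
\[
R_\PI\subseteq R_\phi\ \Llr\ \forall s\in R\setminus\PI\ \exists n\in\N:\ \phi^n\in sR,
\]
and then I would prove that the right-hand side is equivalent to the ``every prime $\QI\not\subseteq\PI$ contains $\phi$'' condition.

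For the forward direction ($\Rightarrow$), assume $R_\phi=R_\PI$ and take any prime $\QI$ with $\QI\not\subseteq\PI$. Then I can pick $s\in\QI\setminus\PI$. Since $s\notin\PI$, the element $1/s$ lies in $R_\PI=R_\phi$, so by the reformulation above there is $n\in\N$ with $\phi^n\in sR\subseteq\QI$. Primality of $\QI$ yields $\phi\in\QI$, as desired.

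For the backward direction ($\Leftarrow$), assume that $\phi$ lies in every prime ideal $\QI\not\subseteq\PI$ and take any $s\in R\setminus\PI$. I need $\phi^n\in sR$ for some $n$, and here is where Lemma \ref{Zorn} enters. Suppose for contradiction that $\phi^n\notin sR$ for every $n\in\N$. Applied to $f=\phi$ and $g=s$, Lemma \ref{Zorn} produces a prime ideal $\QI$ of $R$ with $s\in\QI$ and $\phi^n\notin\QI$ for all $n$. From $s\in\QI\setminus\PI$ we deduce $\QI\not\subseteq\PI$, so by hypothesis $\phi\in\QI$, contradicting $\phi\notin\QI$. Hence $\phi^n\in sR$ for some $n$, and the reformulation gives $1/s\in R_\phi$. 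Since $s\in R\setminus\PI$ was arbitrary, $R_\PI\subseteq R_\phi$, and combined with the automatic inclusion we get $R_\phi=R_\PI$.

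The only nontrivial ingredient is the existence of the prime ideal furnished by Lemma \ref{Zorn}; once that is available, both directions are short and essentially formal. I do not foresee any real obstacle, the key being to translate ``$1/s\in R_\phi$'' into the divisibility statement ``$\phi^n\in sR$'' (which uses that $R$ is a domain so that cancellation in $\QF(R)$ is valid).
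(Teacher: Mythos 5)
Your proof is correct and follows essentially the same route as the paper's: both observe the automatic inclusion $R_\phi\subseteq R_\PI$, handle the forward direction by elementary manipulations, and use Lemma \ref{Zorn} in exactly the same way for the converse. Your reformulation of ``$1/s\in R_\phi$'' as the divisibility condition ``$\phi^n\in sR$ for some $n$'' is a minor stylistic difference that makes both implications read symmetrically, but it is the same argument underneath.
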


\begin{proof}
First observe that if $\phi$ does not belong to a prime ideal $\mathfrak q$, then $R_\phi\subseteq R_\mathfrak q$. Indeed, take an element $x\in R_\phi$. Then $x=f/\phi^n$ for some $f\in R$ and $n\in\N$. Since $\phi\notin \mathfrak q$ and $\mathfrak q$ is a prime ideal we have that $\phi^n\notin\mathfrak q$. Therefore, $x\in R_{\mathfrak q}$.

Assume that $R_\phi=R_\PI$ and take a prime ideal $\mathfrak q$ such that $\mathfrak q\not\subseteq\PI$. This implies that $R_\phi=R_\PI\not\subseteq R_\mathfrak q$. Therefore, by the previous paragraph, we must have that $\phi\in\mathfrak q$.

For the converse, assume that $\phi$ belongs to every prime ideal of $R$ not contained in $\PI$. By the first paragraph and our assumption that $\phi\notin \PI$ we have that $R_\phi\subseteq R_\PI$. Now take an $x\in R_\PI$. This means that $x=f/g$ for some $f,g\in R$ and $g\notin \PI$. If there exists $\psi\in R$ and $n\in \N$ such that $g\cdot \psi=\phi^n$, then
\[
x=\frac fg=\frac{f\cdot\psi}{\phi^n}\in R_\phi.
\]
Suppose that such $\psi$ and $n$ do not exist. Then $gR\cap \{\phi^n\mid n\in\N\}=\emptyset$. Applying Lemma \ref{Zorn} with $\phi=f$, we obtain a prime ideal $\QI$ such that $g\in \QI$ (and hence $\QI\not\subseteq \PI$) and $\phi\notin \QI$. This is a contradiction to our assumption on $\phi$.
\end{proof}

\begin{Lema}\label{12.5.16}
Let $\MI$ be a maximal ideal of a domain $R$. If $S$ is a local ring such that
\[
R\subseteq S\subseteq R_\MI,
\]
then $S=R_\MI$.
\end{Lema}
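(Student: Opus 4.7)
The plan is to let $\NI$ denote the maximal ideal of the local ring $S$ and show that $\NI\cap R=\MI$; once this is known, every $s\in R\setminus \MI$ will be a unit of $S$, and hence every element of $R_\MI$ will already lie in $S$.

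First I would establish the elementary compatibility between units of $S$ and units of $R_\MI$: if $x\in S$ is a unit of $S$, then $1/x$ lies in $S\subseteq R_\MI$, so $x$ is a unit of $R_\MI$ as well. Taking contrapositives, any $x\in S\cap \MI R_\MI$ must lie in $\NI$. In particular, intersecting with $R$ gives
\[
\NI\cap R \;\supseteq\; S\cap \MI R_\MI\cap R \;=\; \MI R_\MI\cap R \;=\; \MI,
\]
where the last equality is the standard contraction $\MI R_\MI\cap R=\MI$ (note $\MI\subseteq R\subseteq S$ so the middle inclusion is automatic).

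Next I would use maximality: $\NI\cap R$ is a proper ideal of $R$ containing the maximal ideal $\MI$, so $\NI\cap R=\MI$. From this the rest is immediate. Given $x\in R_\MI$, write $x=r/s$ with $r\in R$ and $s\in R\setminus\MI$. Because $s\in R\setminus\MI=R\setminus(\NI\cap R)\subseteq S\setminus\NI$, the element $s$ is a unit of the local ring $S$, so $1/s\in S$ and therefore $x=r\cdot(1/s)\in S$. This gives $R_\MI\subseteq S$, and combined with the hypothesis $S\subseteq R_\MI$ yields $S=R_\MI$.

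The only mildly delicate step is the inclusion $\NI\cap R\supseteq \MI$; the rest is direct. That step rests on the observation that $S$, sitting between $R$ and its localization at $\MI$, cannot have any unit outside the units of $R_\MI$, and so its non-units contain $\MI$. No further hypothesis on $R$ (such as noetherianity or integral closure) is needed.
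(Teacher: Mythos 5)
Your proof is correct, and it is a small but genuine reorganization of the paper's argument. Both proofs hinge on the same two facts -- that $\MI$ is maximal in $R$, and that non-units of $R_\MI$ lying in $S$ must be non-units of $S$ (since any unit of $S$ has its inverse in $S\subseteq R_\MI$) -- but they deploy them differently. You first isolate the structural fact $\NI\cap R=\MI$, using the standard contraction $\MI R_\MI\cap R=\MI$ together with the maximality of $\MI$; once that is in hand, every denominator $s\in R\setminus\MI$ is automatically a unit of $S$, and $R_\MI\subseteq S$ follows immediately. The paper never states $\MI R_\MI\cap R=\MI$; instead it takes $x=f/g$ with $g\notin\MI$, uses maximality to produce a Bezout identity $1=ag+b$ with $a\in R$ and $b\in\MI$, observes that $b\in\MI R_\MI\cap S\subseteq\MI_S$, and concludes that $ag=1-b$ is a unit of $S$, whence $1/g=a/(1-b)\in S$. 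Your route makes explicit the contraction $\NI\cap R=\MI$, which is the reason the paper's Bezout computation succeeds; this is slightly more transparent, at the cost of invoking one extra (though entirely standard) fact about localizations of domains. Neither proof requires any hypothesis beyond $R$ being a domain and $\MI$ maximal.
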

\begin{proof}
Take an element $x\in \QF(R)$ such that $x\in R_\MI$. Then $x=f/g$ with $f,g\in R$ and $g\notin \MI$. Since $\MI$ is maximal and $g\notin \MI$ we have that the ideal generated by $g$ and $\MI$ is the whole of $R$. Thus there exist $a\in R$ and $b\in \MI$ such that $1=ag+b$. Let $\MI_S$ be the unique maximal ideal of $S$. Since $\MI R_\MI\cap S$ is an ideal of $S$ we must have that $\MI R_\MI\cap S\subseteq \MI_S$, hence $b\in \MI_S$. This implies that $1-b$ is a unit of $S$ and since $ag=1-b$ we have that $1/g=a/(1-b)\in S$. Therefore, $x\in S$ and $S=R_\MI$.
\end{proof}

The following is Lemma 12.5.17 in \cite{vdD}; for the convenience of the reader we will include its proof here.

\begin{Lema}\label{12.5.17}
Let $R$ be an integrally closed domain with fraction field $L$. Let $F|L$ be a finite separable extension and denote by $R^*$ the integral closure of $R$ in $F$. Take $x\in R^*$ such that $F=L(x)$ (note that every generator of $F$ over $L$ can be multiplied by a suitable element of $R$ to obtain a generator that lies in $R^*$). Then the minimal polynomial $f(X)$ of $x$ over $L$ lies in $R[X]$ and $R^*\subseteq f'(x)^{-1}R[x]$.
\end{Lema}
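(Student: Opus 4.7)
The plan is to prove the two claims separately, using embeddings into an algebraic closure $\bar{L}$ and the Lagrange interpolation identity for $f'(x)$.

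For the first claim, I would pass to $\bar{L}$ and write $f(X)=\prod_{i=1}^{n}(X-x_i)$, where $x_1=x,x_2,\ldots,x_n$ are the conjugates of $x$ over $L$ and $n=[F:L]$. Since $x\in R^*$, it satisfies a monic polynomial over $R$; each $x_i$ satisfies the same polynomial, so every $x_i$ is integral over $R$. Consequently the coefficients of $f(X)$, being the elementary symmetric functions of the $x_i$, are integral over $R$. They also lie in $L$, and since $R$ is integrally closed in $L$, they lie in $R$, giving $f(X)\in R[X]$.

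For the inclusion $R^*\subseteq f'(x)^{-1}R[x]$, fix $y\in R^*$. The strategy is to construct a polynomial $h(X)\in R[X]$ of degree less than $n$ with $h(x)=f'(x)\,y$; this at once yields $y\in f'(x)^{-1}R[x]$. Let $\sigma_1=\mathrm{id},\sigma_2,\ldots,\sigma_n$ be the $L$-embeddings of $F$ into $\bar{L}$ and set $x_i=\sigma_i(x)$ and $y_i=\sigma_i(y)$. I would define
\[
h(X):=\sum_{i=1}^{n}y_i\cdot\frac{f(X)}{X-x_i},
\]
which is an element of $\bar{L}[X]$ of degree less than $n$ since each $f(X)/(X-x_i)=\prod_{j\neq i}(X-x_j)$ is a polynomial. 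Evaluating at $X=x_k$, only the $i=k$ summand survives and produces $y_k\prod_{j\neq k}(x_k-x_j)=y_kf'(x_k)=\sigma_k(yf'(x))$. Taking $k=1$ gives $h(x)=yf'(x)$ as desired.

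The main obstacle is verifying that $h(X)$ actually has coefficients in $R$. First I would show $h(X)\in L[X]$: working in a normal closure of $F|L$, any $L$-automorphism $\tau$ of $\bar{L}$ permutes the embeddings $\sigma_i$ and hence permutes the summands of $h(X)$, so $\tau$ fixes $h(X)$; thus its coefficients lie in the fixed field, which is $L$. Second, each coefficient of $h(X)$ is an integer-coefficient polynomial expression in the $x_j$ and $y_i$, and since $y\in R^*$ is also integral over $R$, every $y_i$ is integral over $R$ as well; therefore each coefficient is integral over $R$. Combining this with $h(X)\in L[X]$ and the integral closedness of $R$ yields $h(X)\in R[X]$, so $yf'(x)=h(x)\in R[x]$, completing the proof. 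The inseparability issue does not appear because $F|L$ is separable, guaranteeing that the $x_i$ are distinct and $f'(x)\neq 0$.
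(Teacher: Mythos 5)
Your proof is correct and follows essentially the same approach as the paper: both establish $f\in R[X]$ via integrality of the conjugates, and both use the Lagrange interpolation polynomial $\sum_i \sigma_i(y)\,f(X)/(X-\sigma_i(x))$, evaluated at $x$, to produce $yf'(x)$ as an element of $R[x]$. The only cosmetic difference is how you verify the interpolation polynomial lies in $R[X]$: the paper first shows $g_1(X)=f(X)/(X-x)\in R^*[X]$ via Gauss' Lemma and then recognizes the coefficients of $yf'(x)$ as traces $\sum_i\sigma_i(yc_j)\in L$, whereas you observe directly that the coefficients are Galois-invariant (hence in $L$) and are integer polynomials in the integral elements $x_i,y_i$ (hence integral over $R$), which avoids the Gauss' Lemma step entirely.
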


\begin{proof}
Let $k=[F:L]$ and $\sigma_1,\ldots,\sigma_k$ be all the $L$-embeddings of $F$ in $\widetilde{L}=\widetilde{F}$, where we assume that $\sigma_1=\textrm{id}$. Then we have that
\[
f(X)=\prod_{i=1}^k(X-\sigma_i(x)).
\]
Since $x\in R^*$, we know that $\sigma_i(x)$ belongs to the integral closure of $R$ in $\widetilde L$ for $1\leq i\leq k$. This means that every coefficient of $f$ is integral over $R$ and since $R$ is integrally closed we have that $f(X)\in R[X]$.

Define now
\[
g_i(X)=\frac{f(X)}{X-\sigma_i(x)}=\prod_{j\neq i}(X-\sigma_j(x))\in\widetilde{L}[X].
\]

We shall prove that
\[
g_1(X)\in R^*[X].
\]
%Recall that $R^*$ is the intersection of all valuation rings $\VR$ of $F$ containing $R$.
Since $f(X)\in R[X]$ and $f(X)=g_1(X)\cdot (X-x)$ (in $F[X]$), using Gauss' Lemma we obtain that all coefficients of $g_1$ must belong to $\VR$. Hence, every coefficient of $g_1$ belongs to $R^*$.

If $i>1$, then $g_i(x)=0$ because $X-x$ divides $g_i$. Also, since $f(X)=g_1(X)\cdot (X-x)$ we have
\[
f'(X)=g_1'(X)\cdot (X-x)+g_1(X)
\]
hence $f'(x)=g_1(x)$. If we write $g_1(X)=c_0+\ldots+c_{k-2}X^{k-2}+X^{k-1}$, then
\[
g_i(X)=\prod_{j\neq i}(X-\sigma_j(x))=\prod_{j\neq 1}(X-\sigma_i\sigma_j(x)) =\sigma_i(c_0)+\ldots+\sigma_i(c_{k-2})X^{k-2}+X^{k-1}.
\]

Take $b\in R^*$. Then
\begin{displaymath}
\begin{array}{rcl}
b\cdot f'(x)&=& b\cdot g_1(x)\\
            &=&\displaystyle\sum_{i=1}^k\sigma_i(b)\cdot g_i(x)\\
            &=&\displaystyle\sum_{i=1}^k\sigma_i(b)\cdot\left(\sigma_i(c_0)+\ldots+\sigma_i(c_{k-2})x^{k-2}+x^{k-1}\right)\\
            &=&\displaystyle\sum_{i=1}^k\sigma_i(bc_0)+\ldots+\left(\sum_{i=1}^k\sigma_i(bc_{k-2})\right)x^{k-2}+
            \left(\sum_{i=1}^k\sigma_i(b)\right)x^{k-1}.
\end{array}
\end{displaymath}
Since each coefficient of $b\cdot f'(x)$ is the trace of some element in $F$ they all belong to $L$. On the other hand, they are integral over $R$, hence they belong to $R$. Thus $b\cdot f'(x)\in R[x]$ and consequently, $b\in f'(x)^{-1}R[x]$. Therefore, $R^*\subseteq f'(x)^{-1}R[x]$.
\end{proof}

We will need the following lemma, which is part (iii) of Theorem 9.3 in \cite{Mat}.

\begin{Lema}\label{trans}
Let $A$ be an integrally closed domain with $L$ as its field of fractions, $F|L$ a normal algebraic extension, and $B$ the integral closure of $A$ in $F$. If $\mathfrak p$ is any prime ideal of $A$, then all ideals of $B$ lying over $\mathfrak p$ are conjugate.
\end{Lema}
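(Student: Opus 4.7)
The plan is to reduce to the Galois case, handle finite Galois by a norm and prime-avoidance argument, and lift to the infinite case via profinite compactness. For the reduction, let $F^s$ denote the separable closure of $L$ in $F$, and let $B^s$ be the integral closure of $A$ in $F^s$. Since $F|L$ is normal, $F^s|L$ is Galois and $F|F^s$ is purely inseparable. For a purely inseparable integral extension, any two primes lying over the same prime of the base must coincide (they share all $p^n$-th roots), so contraction gives a bijection between primes of $B$ over $\mathfrak p$ and primes of $B^s$ over $\mathfrak p$. Every $L$-automorphism of $F$ restricts to an $L$-automorphism of $F^s$, so it suffices to establish transitivity on the primes of $B^s$ over $\mathfrak p$; hence we may assume $F|L$ is Galois, with group $G$.

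For the finite Galois case, let $\mathfrak P, \mathfrak Q$ be primes of $B$ over $\mathfrak p$ and assume, for a contradiction, that $\sigma(\mathfrak P) \neq \mathfrak Q$ for every $\sigma \in G$. By the incomparability theorem for integral extensions, primes of $B$ over $\mathfrak p$ are pairwise incomparable, so $\mathfrak Q \not\subseteq \sigma(\mathfrak P)$ for every $\sigma \in G$. Prime avoidance then produces $x \in \mathfrak Q$ with $x \notin \sigma(\mathfrak P)$ for all $\sigma$. The norm $y := \prod_{\sigma \in G} \sigma(x)$ is integral over $A$ (as each $\sigma(x)$ is) and lies in $L$; since $A$ is integrally closed, $y \in A$. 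As $x \in \mathfrak Q$ we obtain $y \in \mathfrak Q \cap A = \mathfrak p \subseteq \mathfrak P$, and primality of $\mathfrak P$ forces some $\sigma(x) \in \mathfrak P$, i.e.\ $x \in \sigma^{-1}(\mathfrak P)$, contradicting the choice of $x$.

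For infinite Galois $F|L$, endow $G = \Gl(F|L)$ with the Krull topology. For each finite Galois subextension $F_\alpha|L$ with integral closure $B_\alpha$, the finite case applied to $B_\alpha$ yields a non-empty set
\[
S_\alpha := \{\sigma \in G \mid \sigma(\mathfrak P) \cap B_\alpha = \mathfrak Q \cap B_\alpha\},
\]
which is a union of cosets of $\Gl(F|F_\alpha)$ and hence closed in $G$. If $F_\gamma \supseteq F_{\alpha_1}, \ldots, F_{\alpha_n}$, then $S_\gamma \subseteq \bigcap_i S_{\alpha_i}$, giving the finite intersection property. Compactness of $G$ supplies some $\sigma \in \bigcap_\alpha S_\alpha$, and since $B = \bigcup_\alpha B_\alpha$ the primes $\sigma(\mathfrak P)$ and $\mathfrak Q$ agree on each $B_\alpha$ and are therefore equal.

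The main obstacle is the compactness step: one must check that each $S_\alpha$ is closed, that the family has the finite intersection property, and that a prime of $B$ is determined by its contractions to the $B_\alpha$. The finite Galois argument itself is a standard application of the norm together with prime avoidance, and the reduction to separable normal is routine.
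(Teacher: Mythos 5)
Your proof is correct. The paper does not supply its own argument for this lemma — it simply invokes Theorem 9.3(iii) of Matsumura — and your argument is essentially the standard one used there: reduce to the Galois case via the purely inseparable step (which is the same observation as Lemma A.4 in the paper's appendix), handle finite Galois by the norm-plus-prime-avoidance device, and pass to the infinite case by compactness of the Krull topology. All the points you flag at the end (each $S_\alpha$ is clopen as a finite union of cosets of the open subgroup $\Gl(F|F_\alpha)$, the finite intersection property follows by taking composita, and $B=\bigcup_\alpha B_\alpha$ so a prime of $B$ is determined by its traces on the $B_\alpha$) do go through, so there is no gap.
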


\par\smallskip
A domain $R$ is called a \textbf{Pr\"ufer} domain if the localization of $R$ at any prime ideal of $R$ is a valuation ring.

\begin{Prop}\label{finchain}
Let $F|L$ be a finite Galois extension of fields and $R$ a valuation ring of $L$. Let $R^*$ be the integral closure of $R$ in $F$. Then for every ring $S$ such that $R\subseteq S\subseteq R^*$, the set $\Sp(S)$ of prime ideals of $S$ consists of finitely many chains.
\end{Prop}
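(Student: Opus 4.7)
My plan is to first understand $\Sp(R^*)$ completely, then transfer the conclusion to $S$ via the contraction map $\pi\colon \Sp(R^*)\to\Sp(S)$, $\mathfrak q\mapsto \mathfrak q\cap S$. The key structural fact I want to exploit is that $R^*$ is Pr\"ufer with only finitely many maximal ideals.

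First I would show that $R^*$ has finitely many maximal ideals $\mathfrak m_1,\ldots,\mathfrak m_n$. Every maximal ideal of $R^*$ contracts to a maximal ideal of $R$ (since $R^*$ is integral over $R$, maximal ideals go to maximal ideals), and the unique maximal ideal of the valuation ring $R$ is $\MI_R$. By Lemma \ref{trans}, all primes of $R^*$ lying over $\MI_R$ are conjugate under the finite group $\Gl(F|L)$, so there are only finitely many of them. Next, I would invoke the classical fact that the integral closure of a valuation ring in a finite extension of its fraction field is a Pr\"ufer domain; equivalently, the distinct extensions of the valuation of $R$ to $F$ are exactly the localizations $R^*_{\mathfrak m_1},\ldots,R^*_{\mathfrak m_n}$, each of which is a valuation ring of $F$.

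Because each $R^*_{\mathfrak m_i}$ is a valuation ring, its spectrum is totally ordered by inclusion, i.e.\ a chain. Since every prime ideal of $R^*$ lies in some maximal ideal $\mathfrak m_i$, we obtain
\[
\Sp(R^*)=\bigcup_{i=1}^n \Sp(R^*_{\mathfrak m_i}),
\]
which exhibits $\Sp(R^*)$ as a union of $n$ chains.

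To transfer this to an arbitrary intermediate ring $S$ with $R\subseteq S\subseteq R^*$, I note that $R^*$ is integral over $R$ and hence over $S$, so by lying over the contraction map $\pi\colon\Sp(R^*)\to\Sp(S)$ is surjective. Since $\pi$ preserves inclusions, it sends chains to chains, and therefore
\[
\Sp(S)=\pi(\Sp(R^*))=\bigcup_{i=1}^n \pi\bigl(\Sp(R^*_{\mathfrak m_i})\bigr)
\]
is a union of at most $n$ chains, which is the desired conclusion. The only step with any real content is the structural claim that $R^*$ is Pr\"ufer with finitely many maximal ideals; everything else is a soft transfer argument via surjective contraction. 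So the ``main obstacle'' is not a difficulty in this proof but rather deciding how self-contained to make the invocation of the Pr\"ufer property—in a fully detailed write-up one would establish it by producing the valuation rings $R^*_{\mathfrak m_i}$ directly from the $\Gl(F|L)$-orbit of one extension of $v$ to $F$.
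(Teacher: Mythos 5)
Your proposal is correct and follows essentially the same route as the paper's proof: both establish that $R^*$ is a semi-local Pr\"ufer domain (finitely many maximal ideals via conjugacy from Lemma~\ref{trans}, Pr\"ufer since it is the integral closure of a valuation ring in a finite extension), conclude that each localization at a maximal ideal is a valuation ring so $\Sp(R^*)$ is a finite union of chains, and then transfer this to the intermediate ring $S$ via the surjective, inclusion-preserving contraction map coming from the Lying Over property.
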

\begin{proof}
Lemma \ref{trans} shows that $R^*$ is a semi-local ring, i.e., it has only finitely many maximal ideals. On the other hand, since $R$ is a Pr\"ufer domain, also $R^*$ is a Pr\"ufer domain (see Theorem 1.2 of \cite{Fuch}). This means that for each maximal ideal $\MI^*$ of $R^*$ the ring $R^*_{\MI^*}$ is a valuation ring. Hence, $\Sp(R^*_{\MI^*})$ is a chain. Since there is an order preserving bijection between the prime ideals of $R^*_{\MI^*}$ and the prime ideals of $R^*$ contained in $\MI^*$ we conclude that the elements of $\Sp(R^*)$ contained in $\MI^*$ form a chain. Therefore, $\Sp(R^*)$ consists of finitely many chains of prime ideals. By the Lying Over Property for $S\subseteq R^*$ we have that $\Sp(S)=\{\QI\cap S\mid \QI\in\Sp(R^*)\}$. Consequently, $\Sp(S)$ also consists of finitely many chains of prime ideals.
\end{proof}

\begin{Prop}\label{12.5.7}
Take a valuation ring $R$, $f(X)\in R[X]$ a monic polynomial and $\PI$ a prime ideal of $R$. Set
\[
R[x]=R[X]/(f),\textnormal{ for } x=X + (f).
\]
For a polynomial $g\in R[X]$, we denote by $\overline{g}$ the polynomial obtained from $g$ by the reduction modulo $\PI$ of its coefficients. Let
\[
\overline f=\prod_{1\leq i\leq r} \overline{f_i}^{e_i},\textnormal{ with }e_i>0
\]
be the factorization of $\overline f(X)\in R/\PI[X]$ into powers of distinct irreducible polynomials $\overline{f_i}$. Then the prime ideals of $R[x]$ lying over $\PI$ are precisely
\[
\QI_i=(\PI,f_i(x))R[x], \quad 1\leq i\leq r.
\]
Moreover, $\QI_i\neq \QI_j$ if $i\neq j$, and $R[x]/\QI_i\cong \left(R/\PI[X]\right)/(\overline{f_i})$.
\end{Prop}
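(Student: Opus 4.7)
The plan is to reduce to a statement about the one-variable polynomial ring over $R/\PI$ via the standard quotient identification, then read off the primes using that $R/\PI$ is a valuation ring (hence integrally closed). Since $R[x]=R[X]/(f)$, we have
$$R[x]/\PI R[x]\;\cong\;R[X]/\bigl(\PI R[X]+(f)\bigr)\;\cong\;(R/\PI)[X]/(\overline{f}),$$
and the primes of $R[x]$ lying over $\PI$ correspond bijectively, via the quotient map $R[x]\to R[x]/\PI R[x]$, to those primes of $(R/\PI)[X]/(\overline{f})$ whose contraction to $R/\PI$ equals $(0)$.

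Set $k:=\QF(R/\PI)$. Since $R$ is a valuation ring, so is $R/\PI$, and in particular it is integrally closed in $k$. Localizing at $(R/\PI)\setminus\{0\}$ yields $k[X]/(\overline{f})$; primes of $(R/\PI)[X]/(\overline{f})$ lying over $(0)$ correspond bijectively to primes of $k[X]/(\overline{f})$. Since $k[X]$ is a PID and $\overline{f}$ is monic of positive degree, these primes are exactly the ideals $(p)/(\overline{f})$ as $p$ ranges over the monic irreducible factors of $\overline{f}$ in $k[X]$.

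The key identification uses Gauss's Lemma for integrally closed domains in both directions. First, any monic factorization of $\overline{f}$ in $k[X]$ already lives in $(R/\PI)[X]$, so every monic irreducible factor of $\overline{f}$ in $k[X]$ lies in $(R/\PI)[X]$. Conversely, if $\overline{f_i}=gh$ in $k[X]$ with $g,h$ monic, Gauss's Lemma forces $g,h\in(R/\PI)[X]$, and irreducibility of $\overline{f_i}$ there makes one factor a unit; hence each $\overline{f_i}$ stays irreducible in $k[X]$. The monic irreducible factors of $\overline{f}$ in $k[X]$ are therefore precisely (associates of) the $\overline{f_i}$ appearing in the given factorization. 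Chasing the correspondences, the ideal $(\overline{f_i})/(\overline{f})$ pulls back to $\QI_i=(\PI,f_i(x))R[x]$; the $\QI_i$ are pairwise distinct because they come from pairwise distinct primes of $k[X]/(\overline{f})$; and
$$R[x]/\QI_i\;\cong\;R[X]/(\PI,f,f_i)\;\cong\;(R/\PI)[X]/(\overline{f},\overline{f_i})\;=\;(R/\PI)[X]/(\overline{f_i}),$$
the last equality holding because $\overline{f_i}\mid\overline{f}$ in $(R/\PI)[X]$.

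I expect the main technical hurdle to be the Gauss's-Lemma step, which is the one place where the valuation-ring hypothesis on $R$ is used essentially (through its consequence that $R/\PI$ is integrally closed). Everything else amounts to the standard dictionary between primes of a ring and primes of its quotients and localizations.
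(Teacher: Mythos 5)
Your proposal is correct but takes a genuinely different route from the paper. The paper works entirely inside $(R/\PI)[X]$: it defines a surjection $\Phi_i\colon R[X]/(f)\to(R/\PI)[X]/(\overline{f_i})$, checks by hand that $\ker\Phi_i=\QI_i$, and then deduces that $\QI_i$ is prime from the fact that $R/\PI$ is a GCD domain, so that $(R/\PI)[X]$ is a GCD domain and the irreducible $\overline{f_i}$ is therefore prime; completeness is obtained by noting any prime over $\PI$ contains $\prod f_i(x)^{e_i}$, hence some $f_i(x)$, and then invoking incomparability. You instead pass to the generic fibre: after identifying the primes over $\PI$ with the primes of $(R/\PI)[X]/(\overline f)$ contracting to $(0)$, you localize at $(R/\PI)\setminus\{0\}$ to land in $k[X]/(\overline f)$, where $k[X]$ is a PID and everything is classical, and then descend using the integrally-closed version of Gauss's Lemma. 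The two routes rely on essentially equivalent facts about valuation rings (GCD domain versus integrally closed), and both yield the whole statement at once; yours is arguably more structural (it explains \emph{why} the answer is what it is in terms of the residue-field picture), while the paper's is more hands-on and avoids the localization dictionary. One place your argument is a bit terse is ``chasing the correspondences'': to see that $(\overline{f_i})k[X]\cap(R/\PI)[X]=(\overline{f_i})(R/\PI)[X]$ you do need to divide by the monic $\overline{f_i}$ in $(R/\PI)[X]$ and compare degrees; this is routine but is the step that guarantees the prime $(\overline{f_i})k[X]/(\overline f)$ really pulls back to $(\overline{f_i})/(\overline f)$ and hence to $\QI_i$, so it deserves a sentence.
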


\begin{proof}
Consider the map
\[
\Phi_i:R[x]=R[X]/(f)\lra \left(R/\PI[X]\right)/(\overline{f}_i),
\]
given by $\Phi_i(g(X)+(f))=\overline g(X)+(\overline f_i)$. We will prove that $\ker(\Phi_i)=\QI_i$. We have that
\[
\QI_i=(\PI,f_i(x))R[x]\subseteq \ker(\Phi_i).
\]
On the other hand, if $\Phi_i(\phi(x))=0$, then $g(X):=\phi(X)-f_i(X)h(X)\in \PI R[X]$ for some $h(X)\in R[X]$. Therefore,
\[
\phi(x)=g(x)+f_i(x)h(x)\in (\PI,f_i(x))R[x].
\]
Since $R/\PI$ is a GCD domain, so is $R/\PI[X]$ (see Theorem 14.5 of \cite{Gil}). Consequently, since $\overline{f}_i$ is irreducible over $R/\PI[X]$, we have that $\overline f_i$ is prime in $R/\PI[X]$. Hence, $(\overline f_i)$ is a prime ideal of $R/\PI[X]$ and thus $R/\PI[X]/(\overline{f}_i)$ is a domain. Also, since $\Phi_i$ is surjective we conclude that $\QI_i$ is a prime ideal.

If $j\neq i$, then the image of $f_j(X)$ is not zero in $R/\PI[X]/(\overline{f}_i)$, which implies that $f_j(x)\notin\QI_i$. Therefore, $\QI_i\neq\QI_j$.

It remains to prove that every prime ideal $\QI'$ of $R[x]$ lying over $\PI$ is of the form $\QI_i$ for some $i$. Since
\[
\prod f_i(X)^{e_i}-f(X)\in \PI R[X],
\]
and $f(x)=0$ we have that $\prod f_i(x)^{e_i}$ belongs to $\QI'$. Consequently, for some $i$ we have that $f_i(x)\in\QI'$ and hence $\QI_i=(\PI,f_i(x))R[x]\subseteq \QI'$. Since both $\QI'$ and $\QI_i$ are prime ideals lying over $\PI$ and $R[x]$ is integral over $R$, from the incomparability property we conclude that $\QI_i=\QI'$.
\end{proof}

\section{Henselian Elements}

Let $(A,\MI_A)$ be an integrally closed local domain with quotient field $L$. Fix a maximal ideal $\widetilde{\MI}$ of the integral closure $I(A)$ of $A$ in $L^{\mbox{\tiny sep}}$.

\begin{Def}\label{Defif}
We define the \textbf{henselization of $L$ with respect to $\widetilde{\MI}$}
%(and denote it by $(L^h,\widetilde{\MI})$)
as the subfield of $L^{\mbox{\tiny\rm sep}}$ consisting of all elements fixed by the group
\[
\{\sigma\in \Gl(L^{\mbox{\tiny\rm sep}}|L)\mid \sigma(\widetilde{\MI})=\widetilde{\MI}\}.
\]
We also define the \textbf{absolute inertia field of $L$ with respect to $\widetilde{\MI}$}
%(and denote it by $(L^i,\widetilde{\MI})$)
as the subfield of $L^{\mbox{\tiny\rm sep}}$ consisting of all elements fixed by the group
\[
\{\sigma\in \Gl(L^{\mbox{\tiny\rm sep}}|L)\mid \forall x\in I(A):x-\sigma(x)\in\widetilde{\MI}\}.
\]
As $\widetilde{\MI}$ is fixed and there is no danger of confusion, we will denote the above henselization by $L^h$ and the above absolute inertia field by $L^i$.

Let $(L,v)$ be a valued field and take $A$ to be the valuation ring $\VR_v$ of $v$. Choose an extension of $v$ to $L^{\mbox{\tiny\rm sep}}$ and denote it again by $v$. Then the absolute inertia field $L^i$ of $L$ with respect to $v$ is defined to be the absolute inertia field of $L$ with respect to the maximal ideal $\MI_v\cap I(A)$.
\end{Def}

 In what follows, fix a finite extension $F$ of $L$ lying in the absolute inertia field of $L$ with respect to $\widetilde{\MI}$. Let $F'\subseteq L^{\mbox{\tiny\rm sep}}$ be the normal hull of $F$. We set $A'=F'\cap I(A)$ the integral closure of $A$ in $F'$ and $\MI'=\widetilde{\MI}\cap A'$. In view of Lemma \ref{trans}, the maximal ideals of $A'$ are precisely $\MI'_\sigma:=\sigma^{-1}(\MI')$ where $\sigma\in\Gl(F'|L)$. Since $A'$ is integral over $A^*=I(A)\cap F$, the maximal ideals of $A^*$ are precisely $\MI_\sigma:=A^*\cap \MI'_\sigma$. We write $\MI=\MI_{\mbox{\tiny id}}=A^*\cap \MI'$ and consider the local ring $B=A^*_{\MI}$ with its maximal ideal $\MI_B=\MI B$. We also consider the ring $B'=A'_{\MI'}$ with its maximal ideal $\MI_{B'}=\MI'B'$.

\begin{Lema}\label{Lemmanew}
There are $k:=[B/\MI_B:A/\MI_A]$ many automorphisms $\sigma_1,\ldots,\sigma_k\in$ $\Gl(L^\textrm{\rm sep}|L)$ which induce the distinct $A/\MI_A$-embeddings of $B/\MI_B$ in $(A/\MI_A)^{\mbox{\tiny\rm sep}}$ such that for every $\sigma\in \Gl(L^\textrm{\rm sep}|L)$, if $\sigma\neq\sigma_i$ on $F$ for every $i$, $1\leq i\leq k$, then $\MI_\sigma\neq \MI$.
\end{Lema}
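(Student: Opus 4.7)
The plan is to identify the set $\mathcal E$ of $L$-embeddings $\sigma:F\to L^{\mathrm{sep}}$ satisfying $\MI_\sigma=\MI$ with the set $\mathcal R$ of $A/\MI_A$-embeddings of $B/\MI_B=A^*/\MI$ into $(A/\MI_A)^{\mathrm{sep}}$ via the natural residue map. For each $\sigma\in\mathcal E$ the composition $A^*\hookrightarrow I(A)\xrightarrow{\sigma}I(A)\twoheadrightarrow I(A)/\widetilde\MI$ has kernel $\MI_\sigma=\MI$ and descends to an injection $\Phi(\sigma):A^*/\MI\to(A/\MI_A)^{\mathrm{sep}}$. Since $\MI_\sigma$ depends only on $\sigma|_F$, showing that $\Phi:\mathcal E\to\mathcal R$ is a bijection proves the lemma: picking $\sigma_1,\dots,\sigma_k\in\Gl(L^{\mathrm{sep}}|L)$ whose restrictions to $F$ exhaust $\mathcal E$, any $\sigma\in\Gl(L^{\mathrm{sep}}|L)$ with $\MI_\sigma=\MI$ satisfies $\sigma|_F\in\mathcal E$ and therefore agrees on $F$ with some $\sigma_i$.

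Separability of $A^*/\MI$ over $A/\MI_A$, which is immediate from $F\subseteq L^i$, gives $|\mathcal R|=k$. For surjectivity of $\Phi$, I would extend any $\overline\tau\in\mathcal R$ to an $A/\MI_A$-automorphism of $(A/\MI_A)^{\mathrm{sep}}$ and lift it through the canonical isomorphism $\tilde D/\tilde T\cong\mathrm{Gal}((A/\MI_A)^{\mathrm{sep}}|A/\MI_A)$ to an element $\tilde\sigma\in\tilde D$; here $\tilde D$ and $\tilde T$ denote the decomposition and inertia groups of $\widetilde\MI$ in $\Gl(L^{\mathrm{sep}}|L)$. Since $\tilde\sigma(\widetilde\MI)=\widetilde\MI$, we have $\MI_{\tilde\sigma|_F}=\MI$, so $\tilde\sigma|_F\in\mathcal E$ and $\Phi(\tilde\sigma|_F)=\overline\tau$ by construction.

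Injectivity is the delicate point and is where the hypothesis $F\subseteq L^i$ genuinely enters. Given $\sigma_1,\sigma_2\in\mathcal E$ with $\Phi(\sigma_1)=\Phi(\sigma_2)$, I would first use Lemma \ref{trans} applied to $A^*\subseteq I(A)$ inside the normal extension $L^{\mathrm{sep}}|F$ to modify arbitrary extensions $\tilde\sigma_i\in\Gl(L^{\mathrm{sep}}|L)$ of $\sigma_i$ by elements of $\Gl(L^{\mathrm{sep}}|F)$ so that both lie in $\tilde D$; then $\tau:=\tilde\sigma_2^{-1}\tilde\sigma_1\in\tilde D$ acts as the identity on $A^*/\MI$. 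To conclude $\tau|_F=\mathrm{id}$ I invoke the Galois correspondence for $L^i|L^h$, whose group is $\tilde D/\tilde T$: under the identification with $\mathrm{Gal}((A/\MI_A)^{\mathrm{sep}}|A/\MI_A)$, the subfield $FL^h\subseteq L^i$ corresponds to the residue subfield $(FL^h)v=Fv=A^*/\MI$, using that $L^i|L^h$ is unramified so $[FL^h:L^h]=[(FL^h)v:L^h v]=k$. Hence $\tau$ fixing $A^*/\MI$ forces $\tau$ to fix $FL^h$, in particular $\tau|_F=\mathrm{id}$ and $\tilde\sigma_1|_F=\tilde\sigma_2|_F$. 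This Galois-theoretic matchup of $FL^h\subseteq L^i$ with its residue is the main obstacle in the proof.
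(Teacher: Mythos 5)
Your argument is correct and amounts to the same essential route the paper takes: both hinge on the Galois correspondence for the inertial extension $L^i|L^h$ (identifying $\tilde D/\tilde T$ with $\mathrm{Gal}((A/\MI_A)^{\mathrm{sep}}|A/\MI_A)$ and matching $F.L^h$ with its residue field $A^*/\MI$; the paper extracts this from Abhyankar's Theorems 1.47--1.49 together with the fact $F^h=F.L^h$) and on Lemma \ref{trans} to conjugate an arbitrary $\sigma$ with $\MI_\sigma=\MI$ into the decomposition group. The only real difference is presentational: you set up and verify an explicit bijection $\Phi:\mathcal E\to\mathcal R$, handling surjectivity (lifting a residue automorphism through $\tilde D/\tilde T$) and injectivity (via the correspondence matching $F.L^h$ with $A^*/\MI$) separately; the paper instead produces $\sigma_1,\ldots,\sigma_k\in\mathrm{Aut}(L^{\mathrm{sep}}|L^h)$ inducing all residue embeddings at once from Abhyankar's Theorem 1.48, observes that their restrictions to $F.L^h$ exhaust the $k=[F.L^h:L^h]$ embeddings of $F.L^h$ over $L^h$, and then concludes by noting that the conjugated $\sigma\sigma'\in\tilde D$ must restrict on $F.L^h$ to one of them. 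These are the same counting argument in slightly different packaging; your version makes the bijection explicit at the cost of reproving the pieces of Abhyankar that the paper simply cites, and you should be aware that the ``Galois-theoretic matchup'' you identify as the main obstacle is precisely what those cited theorems supply.
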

\begin{proof}
%Let $L^h$ and $F^h$ denote the henselizations of $L$ and $F$ with respect to $\MI$.
From the third assertion in Proposition 1.49 of \cite{Ab} we infer that
$F^h=F. L^h$. This field lies in $L^i$ since $L^h\subset L^i$ and by assumption, $F\subset L^i$. Set $A^h=A'_{\MI'}\cap L^h$ and $B^h=A'_{\MI'}\cap F^h$ (these are local domains and we denote their maximal ideals by $\MI_A^h$ and $\MI_B^h$, respectively). Then by Theorem 1.47 of \cite{Ab} $A^h/\MI_A^h=A/\MI_A$ and $B^h/\MI_B^h=B/\MI_B$. As $F.L^h|L^h$ is a finite subextension of $L^i|L^h$, using Theorem 1.48 of \cite{Ab} we obtain $k:=[B/\MI_B:A/\MI_A]$ many automorphisms $\sigma_1,\ldots,\sigma_k\in \textrm{Aut}(L^{{\mbox{\tiny\rm sep}}}|L^h)$ which induce distinct embeddings $\overline\sigma_1, \ldots,\overline\sigma_k$ of $B/\MI_B$ in $(A/\MI_A)^{{\mbox{\tiny\rm sep}}}$ over $A/\MI_A$ (note that $B/\MI_B|A/\MI_A$ is separable by the same theorem since $F\subset L^i$). Since $F.L^h|L^h$ is an inertial extension of henselian fields, the restrictions of $\sigma_1,\ldots,\sigma_k$ to $F.L^h$ are precisely the $k$ many embeddings of $F.L^h$ over $L^h$ in $L^{\mbox{\tiny\rm sep}}$. Their restrictions to $F$ are $k$ many distinct embeddings of $F$ over $L$ in $L^{\mbox{\tiny\rm sep}}$.

We claim that $\MI_{\sigma}\neq \MI$ for $\sigma\neq\sigma_i$, for every $i$, $1\leq i\leq k$. Assume that $\MI_{\sigma}= \MI$ for some $\sigma\in\textrm{Aut}(L^{\mbox{\tiny\rm sep}}|L)$. Then by Lemma~\ref{trans} there is some $\sigma'\in \textrm{Aut}(L^{\mbox{\tiny\rm sep}}|F)$ such that $\sigma'(\widetilde{\MI})=\sigma^{-1}(\widetilde{\MI})$, that is, $\sigma\sigma'(\widetilde{\MI}) =\widetilde{\MI}$. By definition, $\sigma\sigma'$ is thus an element of the decomposition group of the extension $L^{\mbox{\tiny\rm sep}}|L$ with respect to $\widetilde{\MI}$. On the other hand, this decomposition group is the Galois group of $L^{\mbox{\tiny\rm sep}}|L^h$, where $L^h$ is the henselization of $L$ in $L^{\mbox{\tiny\rm sep}}$ with respect to $\widetilde{\MI}$. Therefore, $\sigma\sigma'$ is trivial on $L^h$ and must consequently coincide on $F.L^h$ with $\sigma_j$ for some $j\in\{1,\ldots,k\}$. It follows that $\sigma$ coincides with $\sigma_j$ on $F$. This proves our claim.
\end{proof}

\begin{Lema}\label{Lroq}
Let $\vartheta \in B/\MI_B$ be a nonzero generator of $B/\MI_B$ over $A/\MI_A$ and take $z\in B$ such that $z+\MI_B=\vartheta$. Then we have:
\begin{description}
\item[(i)] If $\sigma z\in \MI_{B'}$ for every $\sigma\in \Gl(F'|L)$ such that $\MI_\sigma\neq \MI$, then $z$ is a henselian element over $A$.
\item[(ii)] If $A$ is a valuation ring and for every $\sigma\in \Gl(F'|L)$ such that $\MI_\sigma\neq \MI$ either $\sigma z\in \MI_{B'}$ or $(\sigma z)^{-1}\in \MI_{B'}$, then $z$ is a henselian element over $A$.
\end{description}
\end{Lema}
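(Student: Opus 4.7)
The plan is to construct the henselian polynomial from the minimal polynomial $m(X)\in L[X]$ of $z$ over $L$, possibly rescaled by a constant from $A$. I first assume $z$ is integral over $A$ (the case of interest in the applications of this lemma, where $z$ is chosen in $A^\ast$), so that $m\in A[X]$. Let $z=z_1,\ldots,z_d$ be the distinct conjugates of $z$ in $F'$. By Lemma~\ref{Lemmanew}, exactly $k$ of them equal $\sigma_i z$ with $\sigma_i$ in the decomposition group of $\widetilde{\MI}$; these are units in $B'$ whose residues $\overline{\sigma_i}(\vartheta)$ are pairwise distinct, because $\vartheta$ generates $B/\MI_B$ over $A/\MI_A$. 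Every remaining $z_j$ comes from an embedding $\sigma$ with $\MI_\sigma\neq \MI$, so either hypothesis (i) or (ii) constrains it.

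For (i), each such $z_j$ lies in $\MI_{B'}$. In the factored derivative $m'(z)=\prod_{j\geq 2}(z-z_j)$ every factor is then a unit in $B'$: if $z_j=\sigma_i z$ with $i\neq 1$ its residue is $\vartheta-\overline{\sigma_i}(\vartheta)\neq 0$, and if $z_j\in\MI_{B'}$ its residue is $\vartheta\neq 0$. Hence $m'(z)\notin\MI_{B'}$, and since $B\cap\MI_{B'}=\MI_B$ it is a unit in $B$, so $m$ itself is a henselian polynomial for $z$ over $A$.

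For (ii), where $A=\VR_v$ is a valuation ring and $\tilde v$ denotes the valuation of $F'$ whose ring is $B'$, partition the conjugates into $U$ (value $0$), $S$ (in $\MI_{B'}$), and $\mathcal L$ (value ${<}\,0$). The hypothesis places the bad conjugates into $S\cup\mathcal L$ and the good ones into $U$. Crucially, the decomposition group preserves $\tilde v$, so the partition is invariant under it; consequently $\alpha:=\prod_{j\in\mathcal L}z_j$ is fixed by the decomposition group and therefore lies in the henselization $L^h$. Hence $v(\alpha)\in vL^h=vL$, and I can pick $e\in A$ with $v(e)=-v(\alpha)>0$.

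Set $h(X):=e\cdot m(X)$. An ultrametric bound on the elementary symmetric polynomials in the $z_j$ gives $v(a_i)\geq v(\alpha)$ for every coefficient $a_i$ of $m$, so $h\in A[X]$. Writing $(z-z_j)=-z_j(1-z/z_j)$ for $j\in\mathcal L$, with $z/z_j\in\MI_{B'}$ making the second factor a unit, yields $m'(z)=(\text{unit})\cdot\alpha$, whence $v(h'(z))=v(e)+v(\alpha)=0$ and $h'(z)$ is a unit in $B$. The main obstacle is this valuation-theoretic step in (ii): recognising that the ``large'' conjugates assemble into an element $\alpha\in L^h$ is what makes the required scalar $e\in A$ available, and it is this observation that reduces (ii) to essentially the same computation as (i).
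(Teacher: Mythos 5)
Your proof is correct, and for part \textbf{(i)} it is essentially the paper's argument: factor $m'(z)$ over $F'$ and show each factor $z-z_j$ has nonzero residue, hence $m'(z)\in B^\times$. The preliminary restriction to $z$ integral over $A$ is convenient but unnecessary here: in case \textbf{(i)} every conjugate $z_j$ lies in $B'$ by hypothesis, and the paper observes that the elementary symmetric functions of the $z_j$ then lie in $B'\cap L=A$ automatically, so $m\in A[X]$ without assuming integrality. (Note also that the hypotheses force $z$ to generate $F$ over $L$, since $\sigma z\ne z$ for every nontrivial embedding $\sigma$ of $F$, so your minimal polynomial coincides with the paper's $h_0=\prod_i(X-\sigma_i z)$.) For part \textbf{(ii)} you take a genuinely different route. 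The paper normalizes $h_0$ by dividing through by a coefficient of minimal value to obtain an $A$-primitive polynomial $h$, introduces $h_1=h_0/\prod_l\sigma_l z$ as an auxiliary $B'$-primitive polynomial, deduces that $h=dh_1$ with $d$ a unit of $B'$, and finishes by computing the residue of $h$ and noting $\vartheta$ is a simple root. You instead obtain the normalizing scalar $e\in A$ from the observation that $\alpha=\prod_{j\in\mathcal L}z_j$ is invariant under the decomposition group, hence lies in the henselization $L^h$, which is immediate over $L$, so $v(\alpha)\in vL$; you then verify $em\in A[X]$ by an ultrametric bound and compute $v((em)'(z))=0$ directly. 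Both work, and your identification of $\alpha$ as an element of $L^h$ is a nice structural remark; it is however stronger than needed, since $v(\alpha)$ is simply the value of the coefficient of $m$ of minimal value, which already lies in $L$ because $m\in L[X]$ — this is precisely what the paper's ``$A$-primitive'' rescaling exploits without mentioning $L^h$ at all. So the two proofs are comparable in length, with yours favoring an explicit valuation computation of the derivative and the paper's favoring a residue argument and a primitivity comparison.
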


\begin{Obs}
The assumption that $A$ is a valuation ring means that we are restricted to the valuative case, i.e., $A=\VR_L$ and $B=\VR_F$ for a finite inertial extension $(F|L,v)$ (and $F'$ being the normal hull of $F$ in $L^{\mbox{\tiny\rm sep}}$). In that case, the assumption that either $\sigma z\in \MI_{B'}$ or $(\sigma z)^{-1}\in \MI_{B'}$ is equivalent to saying that $\sigma z$ is not a unit of $B'$.
\end{Obs}

\begin{proof}[Proof of Lemma \ref{Lroq}]
Let $\sigma_1,\ldots,\sigma_n$ be all the embeddings of $F$ in $L^{\mbox{\tiny\rm sep}}$ and assume without loss of generality that $\sigma_1,\ldots,\sigma_k$ are the embeddings appearing in Lemma \ref{Lemmanew},
\[
\sigma_j(z)\in \MI_{B'}\quad\mbox{ for }k<j\leq r
\]
and
\[
\sigma_l(z^{-1})\in \MI_{B'}\quad\mbox{ for }r<l\leq n.
\]
Set
\[
h_0(X)=\prod_{i=1}^k(X-\sigma_iz)\prod_{j=k+1}^r(X-\sigma_jz)\prod_{l=r+1}^n(X-\sigma_lz).
\]

In case \textbf{(i)}, we have that $r=n$, and hence all the coefficients of $h_0$ belong to $A=B'\cap L$. Moreover,
\[
h_0(X)+\MI_{B'}[X]=g(X)\cdot X^{n-k}
\]
where $g(X)$ is the minimal polynomial of $\vartheta$ over $A/\MI_A$. Since $\vartheta$ is nonzero, the polynomials $g(X)$ and $X$ are coprime and hence $z+\MI_{B}=\vartheta$ is a simple root of $h_0(X)+\MI_{B'}[X]$. This means that $h_0'(z)$ is a unit of $B'$, and as it belongs to $B$, it is also a unit of $B$.

In case \textbf{(ii)}, we can divide $h_0$ by a suitable coefficient to obtain an \textbf{$A$-primitive polynomial} $h(X)\in L[X]$, i.e., a polynomial having all coefficients in $A$ and at least one of its coefficients being a unit of $A$. Consider
\[
h_1(X)=\prod_{i=1}^k(X-\sigma_iz)\prod_{j=k+1}^r(X-\sigma_jz)\prod_{l=r+1}^n((\sigma_lz)^{-1}X-1),
\]
which is obtained by dividing $h_0(X)$ by the factor $\prod_l \sigma_lz\in \widetilde L$. The polynomials $h$ and $h_1$ differ by a constant factor $d\in \widetilde L$ and since both are $B'$-primitive we have that $d$ is a unit of $B'$. Thus,
\[
h(X)=d\cdot h_1(X)\textnormal{ with }d\in B'^\times
\]
and consequently,
\[
h(X)+\MI_{B'}[X]=(d+\MI_{B'})\cdot g(X)\cdot X^{r-k}\cdot(-1)^{n-r}.
\]
Reasoning as in case \textbf{(i)} we conclude that $z$ is a henselian element over $A$.
\end{proof}

We will prove now our main theorems.

\begin{proof}[Proof of Theorem \ref{main1}]
Let $\vartheta$ be a generator of the separable extension $B/\MI_B|A/\MI_A$. Since $A^*/\MI=B/\MI_B$ we can employ the Chinese Remainder Theorem to find an element $\eta\in A^*$ such that $\eta +\MI_B=\vartheta$ and $\eta\in \MI_\sigma$ for every $\sigma\in \Gl(F'|L)$ such that $\MI_\sigma\neq\MI$ (we can do that because $\MI_\sigma$ is a maximal ideal of $A^*$ for every $\sigma\in \Gl(F'|L)$). Since $\MI_\sigma=A^*\cap\sigma^{-1}(\MI')$ this means that $\sigma(\eta)\in\MI'\subseteq \MI_{B'}$ for every $\sigma\in \Gl(F'|L)$ such that $\MI_\sigma\neq\MI$. Applying part \textbf{(i)} of Lemma \ref{Lroq} we obtain that $\eta$ is a henselian element over $A$ with henselian polynomial
\[
h(X)=\prod_{i=1}^k(X-\sigma_i\eta)\prod_{j=k+1}^n(X-\sigma_j\eta).
\]
It remains to prove that $F=L(\eta)$, i.e., that the polynomial $h(X)$ is irreducible (and is hence the minimal polynomial of $\eta$). It is enough to prove that $\sigma_i\eta\neq \eta$ for $1\leq i\leq n$. From the separability of $B/\MI_B|A/\MI_A$ we have that for $1\leq i\leq k$, $\sigma_i(\vartheta)\neq \vartheta$, hence $\sigma_i(\eta)\neq \eta$. For $i>k$ we have that
\[
\sigma_i \eta+\MI_B=0\neq \vartheta=\eta+\MI_B,
\]
hence $\sigma_i \eta\neq \eta$.
\end{proof}

\begin{proof}[Proof of Theorem \ref{main2}]
Set $\mathfrak n=\MI_B\cap A[\eta]$ and let $h(X)\in A[X]$ be the monic polynomial constructed in the proof of Lemma \ref{Lroq}. Using Lemma \ref{12.5.17} for the first inclusion, we obtain that
\[
A^*\subseteq \frac{1}{h'(\eta)}A[\eta]\subseteq A[\eta]_\mathfrak n\subseteq A^*_{\MI^*}=B.
\]
Hence, by Lemma \ref{12.5.16} we have that
\[
A[\eta]_\mathfrak n = B.
\]

Take any finite set $Z=\{f_1,\ldots,f_r\}\subseteq B$. For every $f_i\in Z\subseteq B=A[\eta]_\mathfrak n$ we have $f_i=\displaystyle\frac{a_i}{b_i}$ for some $a_i,b_i\in A[\eta]$ and $b_i\notin \mathfrak{n}$. Taking
\[
u=\prod_{i=1}^r b_i\in A[\eta]
\]
we obtain that $Z\subseteq A[\eta,1/u]$. As a product of the units $b_i$, also $u$ is a unit of $B$.
\end{proof}

In order to prove Theorem \ref{main3} we will need the following:
\begin{Lema}\label{Roq1}
Let $b\in B$ be a henselian element over $A$ which is a unit. Then $b^{-1}$ is also henselian over $A$.
\end{Lema}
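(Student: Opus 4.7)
The plan is to produce a henselian polynomial for $b^{-1}$ by applying the classical \emph{reciprocal polynomial} construction to the henselian polynomial of $b$. Let $h(X)=\sum_{i=0}^n a_iX^i\in A[X]$ be a henselian polynomial for $b$, so $h(b)=0$ and $h'(b)\in B^\times$. Since $b\in B^\times$, the element $b^{-1}$ lives in $B$, and we set
\[
\tilde h(X)\;:=\;X^n\,h(1/X)\;=\;\sum_{i=0}^n a_{n-i}X^i\,\in\,A[X].
\]
I would first check that $\tilde h(b^{-1})=0$: this is immediate from $\tilde h(b^{-1})=b^{-n}h(b)=0$, noting that the substitution is legitimate because $b$ is a unit.

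The main computation is to check that $\tilde h'(b^{-1})$ is a unit of $B$. Differentiating the product $\tilde h(X)=X^n h(1/X)$ and evaluating at $X=b^{-1}$, the first term vanishes (since $h(b)=0$), and what remains can be rearranged, using that $h(b)=0$ gives a linear relation among $1,b,b^2,\dots,b^n$, to the identity
\[
\tilde h'(b^{-1})\;=\;-\,b^{\,2-n}\,h'(b).
\]
Since both $b$ and $h'(b)$ are units of $B$, so is $\tilde h'(b^{-1})$, which verifies that $\tilde h$ is a henselian polynomial for $b^{-1}$ over $A$.

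The argument has no serious obstacle; the only point that requires care is not to assume $h$ is monic (the statement of \textbf{henselian element} in the paper explicitly allows non-monic polynomials), so one must verify the derivative identity directly rather than by appealing to the factored form of $h$. Once this algebraic identity is in place, the conclusion is immediate.
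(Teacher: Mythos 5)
Your proof is correct and matches the paper's argument exactly: both define the reciprocal polynomial $g(X)=X^n h(1/X)\in A[X]$, observe $g(b^{-1})=b^{-n}h(b)=0$, and compute $g'(b^{-1})=-b^{2-n}h'(b)$, which is a unit since $b$ and $h'(b)$ are. The only small remark is that the identity for $g'(b^{-1})$ follows directly from the product and chain rules together with $h(b)=0$ killing the first term; no additional ``rearrangement via the linear relation'' is needed beyond that.
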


\begin{proof}
Let $h(X)\in A[X]$ be a henselian polynomial for $b$. Define the polynomial
\[
g(X)=X^n\cdot h(X^{-1})\in A[X],
\]
where $n=\deg h$. Then $g\left(b^{-1}\right)=0$. Moreover,
\[
g'(X)=nX^{n-1}h(X^{-1})-X^{n-2} h'(X^{-1})
\]
and hence $g'(b^{-1})=-b^{-(n-2)}h'(b)$. Since both $b$ and $h'(b)$ are units of $B$, so is $g'(b^{-1})$.
\end{proof}

\begin{proof}[Proof of Theorem \ref{main3}]
Since $A$, and consequently also $B$, is a valuation ring by assumption, we are working in the case where $A=\VR_L$ and $B=\VR_F$ for a finite valued inertial extension $(F|L,v)$. Denote by $W$ the set of valuations $w$ on $F$ different from $v$ such that $w|_L=v|_L$.

Take any element $b\in\VR_F$. Take $\vartheta$ and $\eta$ as in the proof of Theorem \ref{main1}.
Then $\vartheta=\eta v$ is a generator of $Fv$ over $Lv$, so $bv$ can be written as a polynomial in $\vartheta$ with coefficients in $Lv$. Let $f(X)\in \VR_L[X]$ be an inverse image of that polynomial under the residue map and define $b'=b-f(\eta)+\eta$. Then $bv=f(\eta)v$ and $b'v=\vartheta$. Also, we have that $b\in \VR_L[\eta,b']$. Therefore, it is enough to prove that $b'\in\VR_L[r,s]$ for some henselian elements $r$ and $s$.

Employing the Chinese Remainder Theorem (see Theorem 6.60 of \cite{KVT}), we find an element $c\in\VR_F$ with the following properties:

$$
\begin{array}{cl}
v(c-1)>0 &\textnormal{ so that }cv=1 \\
 w(c)>0&\textnormal{ if } w(b')\geq 0\\
 w(c)=0&\mbox{ if } w(b')<0
\end{array}
$$
for all $ w\in W$. Then the element $r=b'c$ has the following properties:
$$
\begin{array}{cl}
rv=b'v=\vartheta& \\
 w(r)>0&\mbox{ if } w(b')\geq 0\\
 w(r)<0&\mbox{ if } w(b')<0
\end{array}
$$
and hence $ w(r)\neq 0$ for every $ w\in W$. According to part \textbf{(ii)} of Lemma \ref{Lroq}, $r$ is a henselian element. On the other hand, the element $rc$ has the same above properties, so it is also henselian. Since $v(rc)=0$ we can apply Lemma \ref{Roq1} to obtain that $s:=(rc)^{-1}$ is also a henselian element. Therefore,
\[
b'=r\cdot c^{-1}=r^2\cdot(rc)^{-1}=r^2\cdot s\in\VR_L[r,s]
\]
as required.
\end{proof}

\begin{proof}[Proof of Theorem \ref{main4}]

\textbf{(i)} $\Lra$ \textbf{(ii)}: Since $\VR_F$ is a finitely generated $\VR_L$-algebra, we have that $\VR_F=\VR_L[a_1,\ldots,a_r]$ for some $a_1,\ldots,a_r\in\VR_F$. Applying Theorem \ref{main2} to the set $Z:=\{a_1,\ldots,a_r\}$ we obtain that there exists a unit $u\in \VR_F$ such that $Z\subseteq \VR_L[\eta,1/u]$, with $\eta$ as in Theorem 1.2. Therefore,
\[
\VR_F=\VR_L[a_1,\ldots,a_r]\subseteq \VR_L[\eta,1/u]\subseteq \VR_F
\]
and consequently, equality holds everywhere.

\textbf{(ii)} $\Lra$ \textbf{(iii)}: Just apply Theorem \ref{main3} to the element $a=1/u$.

\textbf{(iii)} $\Lra$ \textbf{(i)}: This is trivial.

\textbf{(ii)} $\Llr$ \textbf{(iv)}: By Theorem \ref{main2}, $\VR_F=\VR_L[\eta]_\mathfrak n$. Further,
$\VR_L[\eta,1/u]=\VR_L[\eta]_{u}$. Applying Lemma \ref{Loc} with $R=\VR_L[\eta]$, $\phi=u$ and $\PI=\mathfrak n$ we obtain that $\VR_L[\eta]_\mathfrak n=\VR_L[\eta]_{u}$ if and only if $u$ belongs to every prime ideal of $\VR_L[\eta]$ which is not contained in $\mathfrak n$. This proves the equivalence.
Note that if $u\in \VR_L[\eta]$ and $\VR_F=\VR_L[\eta,1/u]$, then $u$ is a unit in $\VR_F$.

\textbf{(iv)} $\Lra$ \textbf{(v)}: Suppose that \textbf{(v)} is not true. This means that there exists a chain of prime ideals $(\PI_\lambda)_{\lambda\in \Lambda}$ of $\VR_L[\eta]$ such that $\PI_\lambda\not\subseteq\mathfrak n$ for every $\lambda\in \Lambda$ and
\[
\bigcap_{\lambda\in \Lambda}\PI_\lambda\subseteq\mathfrak n.
\]
Since $(\PI_\lambda)_{\lambda\in \Lambda}\subseteq \mathcal S$,
\[
\bigcap_{\PI\in\mathcal S}\PI\subseteq \bigcap_{\lambda\in \Lambda}\PI_\lambda
\]
and we conclude that \textbf{(iv)} does not hold.

\textbf{(v)} $\Lra$ \textbf{(iv)}: From Proposition \ref{finchain}, $\Sp(\VR_L[\eta])$ consists of finitely many chains of prime ideals, say
\[
\Sp(\VR_L[\eta])=\bigcup_{i=1}^r(\mathfrak q_{i,\lambda_i})_{\lambda_i\in \Lambda_i}.
\]
For each chain $(\mathfrak q_{i,\lambda_i})_{\lambda_i\in \Lambda_i}$, if $\mathfrak q_{i,\lambda_i}\not\subseteq \mathfrak n$ for some $\lambda_i\in \Lambda_i$, then by our assumption
\[
\left(\bigcap_{\mathfrak q_{i,\lambda_i}\not\subseteq\mathfrak n}\mathfrak q_{i,\lambda_i}\right)\not\subseteq \mathfrak n.
\]
Hence, for each $i$ there exits an element $u_i\in \VR_L[\eta]\setminus\mathfrak n$ such that $u_i\in \mathfrak q_{i,\lambda_i}$ for every $\lambda_i\in \Lambda_i$ with $\mathfrak q_{i,\lambda_i}\not\subseteq\mathfrak n$. Take $u$ to be the product of these $u_i$'s. Then $u$ belongs to every prime ideal of $\VR_L[\eta]$ not contained in $\mathfrak n$. Since $\NI$ is the set of elements of $\VR_L[\eta]$ which are not units of $\VR_F$ and $u\notin \NI$, we obtain that $u$ is a unit of $\VR_F$. Therefore, \textbf{(iv)} holds.
\end{proof}

In order to prove Theorem \ref{counterexample} we will need the following result, which is Proposition 4 of \cite{FVKD}.

\begin{Prop}\label{KCE}
There are valued fields $(L,v)$ such that $\VR_L$ contains no minimal nonzero prime ideal and the residue field $(L v_\PI,\overline v_\PI)$ is henselian for every nonzero prime ideal $\PI$, but $(L,v)$ is not itself henselian.
\end{Prop}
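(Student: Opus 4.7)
This is cited as Proposition~4 of \cite{FVKD}, so the plan is to outline the construction given there.

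First, the ``no minimal nonzero prime'' condition translates, via the order-reversing bijection between prime ideals of $\VR_L$ and convex subgroups of $vL$ recalled in the Preliminaries, into the requirement that the set of proper convex subgroups of $vL$ has no maximum. A natural choice is $vL=\bigoplus_{n\geq 1}\Z e_n$ ordered so that an element is positive iff its last nonzero coordinate is positive, which gives $e_1<e_2<e_3<\cdots$; the proper convex subgroups are then exactly $\Delta_n=\bigoplus_{i\leq n}\Z e_i$ for $n\geq 0$, forming a strictly ascending chain with union $vL$ and no top element. The primes $\PI_n$ corresponding to $\Delta_n$ accordingly form a strictly descending chain in $\VR_L$ whose intersection is $\{0\}$, so no minimal nonzero prime exists.

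Second, the field $(L,v)$ must be constructed so that every residue field $(Lv_{\PI_n},\overline v_{\PI_n})$ is henselian while $(L,v)$ itself is not. The natural target for $Lv_{\PI_n}$ is the iterated Laurent series $\F_p((X_1))\cdots((X_n))$, which is henselian at its full rank-$n$ valuation as a composite of two henselian layers iterated $n$ times. One therefore takes $L$ to be a subfield of the Hahn series field $\F_p((\Gamma))$ whose image under each coarsening $v_{\PI_n}$ is this iterated Laurent series, but which is chosen small enough that some explicit Hensel configuration at the full valuation $v$ remains unsolved --- for instance a configuration whose formal solution in $\F_p((\Gamma))$ has support unbounded in $\Gamma$, and hence cannot live at any finite truncation inside $L$.

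The main obstacle is to maintain this balance simultaneously: the full Hahn series $\F_p((\Gamma))$ is henselian, so $L$ cannot be taken too large, while the naive direct limit of the iterated Laurent approximations is also henselian, because any Hensel configuration there lives at some finite stage $n$ and gets resolved once one more Laurent layer is added. The delicate construction that achieves both properties simultaneously --- arranging henselian residues at every nonzero prime while preserving a specific witness of non-henselianity at $v$ itself --- is carried out in \cite{FVKD}, and the present paper relies on that construction by citation.
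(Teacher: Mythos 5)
Your proposal correctly recognizes that the paper itself proves nothing here but simply invokes Proposition~4 of \cite{FVKD} as a citation, which is exactly the approach taken in the text. Your supplementary sketch of the construction (a rank-$\omega$ value group with no top convex subgroup, residue fields at each coarsening made henselian, and $L$ kept small enough to miss a Hensel root at the full valuation) is a reasonable outline of what the cited reference must deliver, though, as you acknowledge, the actual verification is carried out in \cite{FVKD} rather than in the present paper.
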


\begin{proof}[Proof of Theorem \ref{counterexample}]
Let $(L,v)$ be the valued field given in Proposition \ref{KCE}. Extend $v$ to the algebraic closure $\widetilde L$ of $L$ (denote this extension again by $v$). Since $(L,v)$ is not henselian, there exists a finite extension $F$ of $L$ with $F\subseteq L^h\subseteq (L^i,v)$ and a valuation $ w$ on $F$ with $ w\neq v|_F$ and $v|_L= w|_L$. By Theorem 1.2 there exists $\eta\in F$ such that $F=L(\eta)$, the monic minimal polynomial $h(X)$ of $\eta$ over $L$ lies in $\VR_L[X]$ and $\eta$ and $h'(\eta)$ are units in $\VR_F$. Moreover, from the construction of $h$ in the proof of Theorem 1.2 we have that
\[
hv (X)=X^lg(X)\in Lv[X],\ l\geq 1,
\]
where $g(X)$ is the separable minimal polynomial of a generator $\vartheta$ of $Fv$ over $Lv$.

%Given any prime ideal $\PI$ of $\VR_L$ we consider the coarsening $v_\PI$ of $v$ associated with $\PI$.

We note that if $\PI$ is a prime ideal in $\VR_L$ with
$(0)\subsetneq \PI\subsetneq \VR_L$, then $v_\PI$ is nontrivial and $v_\PI\neq v$.

\begin{Claim}\label{CL1}
There exist polynomials $F,G\in \VR_L[X]$ such that $h-FG\in \PI \VR_L[X]$ with $Fv(X)=X^l$, $Gv(X)=g(X)$ and $\deg F=l$.
\end{Claim}

{\it Proof.}
By the assumption on the valued field $(F,v)$ we have that $(Lv_\PI,\overline v_\PI)$ is henselian. Since $X$ and $g(X)$ are coprime, we can infer from part (3) of Theorem 4.1.3 of \cite{Pre} that there exist polynomials $\overline F,\overline G\in \VR_{\overline v_\PI}[X]$ such that
\[
hv_\PI(X)=\overline F(X)\overline G(X),\ \overline F\overline v_\PI=X^l,\ \overline G\overline v_\PI=g(X)\mbox{ and }\deg \overline F=l.
\]
Take any polynomials $F(X),G(X)\in\VR_L[X]$ such that $Fv_\PI=\overline F$, $Gv_\PI=\overline G$ and $\deg F=l$. Then $Fv(X)=X^l$ and $Gv(X)=g(X)$. Since the residue map associated to $v_\PI$ in $\VR_L$ is the reduction modulo $\PI$ and $hv_\PI(X)- Fv_\PI(X) Gv_\PI(X)=0$ we have that every coefficient of $h-FG$ belongs to $\PI$. Therefore, $h-FG\in \PI \VR_L[X]$.
This proves our Claim.

\par\smallskip
Take a prime ideal $\PI$ of $\VR_L$. We claim that there exists a prime ideal $\QI$ of $\VR_L[\eta]$ lying over $\PI$ such that $\QI\not\subseteq \NI=\MI_F\cap \VR_L[\eta]$. By Claim \ref{CL1}, there exist $F,G\in \VR_L[X]$ with $Fv(X)=X^l$ such that $\overline h(X)=\overline F(X)\overline G(X)$ in $\left(\VR_L/\PI\right) [X]$. Take a monic polynomial $f\in \VR_L[X]$ such that its reduction in $\left(\VR_L/\PI\right)[X]$ is irreducible and divides the reduction $\overline F$ of $F$. By Proposition \ref{12.5.7} we get that $\QI=(\PI,f(\eta))\VR[\eta]$ is a prime ideal of $\VR_L[\eta]$ lying over $\PI$. Since $f$ divides $F$ modulo $\PI\VR_L[X]$ it also divides $F$ modulo $\MI_L\VR_L[X]$, so $fv(X)=X^r$ for some $r$, $1\leq r\leq l$. This means that $f(\eta)v=fv(\eta v)=\vartheta^r\neq 0$ and hence $f(\eta)\notin \NI$. Therefore, $\QI\not\subseteq \NI$.

Suppose towards a contradiction that $\VR_F$ is a finitely generated $\VR_L$-algebra. From Proposition \ref{finchain}, $\Sp(\VR_L[\eta])$ consists of finitely many chains of prime ideals, say
\[
\Sp(\VR_L[\eta])=\bigcup_{i=1}^r(\mathfrak q_{i,\lambda_i})_{\lambda_i\in \Lambda_i}.
\]
For each chain $(\mathfrak q_{i,\lambda_i})_{\lambda_i\in \Lambda_i}$, if $\mathfrak q_{i,\lambda_i}\not\subseteq \mathfrak n$ for some $\lambda_i\in \Lambda_i$ we set
\[
\QI_i:=\bigcap_{\mathfrak q_{i,\lambda_i}\not\subseteq\mathfrak n}\mathfrak q_{i,\lambda_i}.
\]
By Theorem \ref{main4}, $\QI_i\not\subseteq \NI$ and in particular, $\QI_i\neq (0)$. Set $\PI_i=\QI_i\cap\VR_L$. Because the prime ideals in $\VR_L$ form a chain, there is $i_0\in \{1,\ldots,r\}$ such that $\PI_{i_0}$ is the intersection of $\PI_i$'s. By the choice of $\PI_{i_0}$, if a prime ideal $\QI$ of $\VR_L[\eta]$ is not contained in $\NI$, then $\PI_{i_0}\subseteq \QI\cap\VR_L$. Also, $\PI_{i_0}$ is a prime ideal of $\VR_L$ lying below the nonzero prime ideal $\QI_{i_0}$ of $\VR_L[\eta]$, so $\PI_{i_0}\neq (0)$. Hence, for every prime ideal $\PI$ of $\VR_L$ such that $(0)\subsetneq \PI\subsetneq\PI_{i_0}$ (which exists because of our assumption on the value group of $(L,v)$) and every prime ideal $\QI$ of $\VR_L[\eta]$ lying over $\PI$ we have that $\QI\subseteq\NI$. This is a contradiction to the conclusion of the previous paragraph.
\end{proof}

\begin{proof}[Proof of Theorem \ref{main5}]
Take any chain $(\mathfrak q_\lambda)_{\lambda\in\Lambda}$ of prime ideals of $\VR_L[\eta]$ such that $\mathfrak q_\lambda\not\subseteq \mathfrak n$ for every $\lambda\in \Lambda$. Observe that $\Lambda$ can be seen as a subset of the indexing set $\Lambda'$ of the prime ideals of $\VR_L$. Indeed, for every $\lambda\in \Lambda$ the ideal $\PI_\lambda\cap\VR_L$ is a prime ideal of $\VR_L$. Also, for any two elements $\lambda_1,\lambda_2\in \Lambda$, if $\PI_{\lambda_1}\cap\VR_L=\PI_{\lambda_2}\cap\VR_L$, then by the incomparability property we have that $\PI_{\lambda_1}=\PI_{\lambda_2}$. Since the prime ideals of $\VR_L$ are well-ordered by inclusion the set $\Lambda$ has a minimum $\lambda_0$. This means that
\[
\bigcap_{\lambda\in \Lambda}\PI_\lambda=\PI_{\lambda_0}\not\subseteq \mathfrak n
\]
which proves that the condition \textbf{(v)} of Theorem \ref{main4} holds.
\end{proof}

\section{Local Uniformization}\label{lcd}
The next proposition is essential for the proof of Theorem \ref{Teo_4}.

\begin{Prop}\label{Spiv}
Let $(L,v)$ be a valued field and fix an extension of $v$ to $\widetilde{L}$ (again denoted by $v$). Let $\eta_1,\ldots,\eta_r\in \widetilde{L}$ be henselian elements over $L$ with henselian polynomials $h_i(X)\in L[X]$. Assume that there exists a (finite dimensional) regular local ring $R\subseteq L$ dominated by $\VR_L$ such that $\QF (R)=L$ and $h_i(X)\in R[X]$ for $1\leq i\leq r$. Then
\[
R[\eta_1,\ldots,\eta_r]_{\MI_v\cap R[\eta_1,\ldots,\eta_r]}
\]
is also regular.
\end{Prop}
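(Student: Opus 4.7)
The plan is to induct on $r$ to reduce to a single henselian element, then express $R[\eta]_{\mathfrak p}$ as the quotient of the regular local ring $R[X]_{\mathfrak q}$ by a regular parameter. For the induction, I set $R' = R[\eta_1,\ldots,\eta_{r-1}]_{\mathfrak p'}$ with $\mathfrak p' = \MI_v \cap R[\eta_1,\ldots,\eta_{r-1}]$. By the inductive hypothesis, $R'$ is a finite-dimensional regular local ring dominated by $\VR_v$, with $\QF(R') = L(\eta_1,\ldots,\eta_{r-1})$. Since $h_r \in R[X] \subseteq R'[X]$, the element $\eta_r$ is henselian over $\QF(R')$, and $R[\eta_1,\ldots,\eta_r]_{\MI_v \cap R[\eta_1,\ldots,\eta_r]}$ coincides with $R'[\eta_r]_{\MI_v \cap R'[\eta_r]}$ (both are the subring of $L(\eta_1,\ldots,\eta_r)$ obtained by inverting every element of $R[\eta_1,\ldots,\eta_r]$ outside $\MI_v$). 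This reduces the problem to $r = 1$.

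For $r = 1$, write $\eta = \eta_1$, $h = h_1$, $S = R[\eta]$, $\mathfrak p = \MI_v \cap S$, and consider the surjection $\phi \colon R[X] \twoheadrightarrow S$ sending $X \mapsto \eta$, with $\mathfrak q := \phi^{-1}(\mathfrak p)$. Then $h \in \mathfrak q$ and, because $h'(\eta)$ is a unit in $S_{\mathfrak p}$, also $h'(X) \notin \mathfrak q$. The polynomial ring $R[X]$ is regular (since $R$ is), so $R[X]_{\mathfrak q}$ is regular of dimension $\dim R + 1$. The central step is to show that $h$ is a regular parameter of $R[X]_{\mathfrak q}$. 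Set $k = R/\MI_R$ and let $\bar f \in k[X]$ be the irreducible generator of $\bar{\mathfrak q}$; factor $\bar h = \bar f^{e} \bar g$ with $\gcd(\bar f, \bar g) = 1$. The nonvanishing of the image of $h'(\eta)$ in the residue field at $\mathfrak p$, applied via the product rule to $\bar h$, forces $e = 1$. Consequently $\bar h$ generates $\bar{\mathfrak q}\, k[X]_{\bar{\mathfrak q}}$, and lifting shows that $h$ together with a regular system of parameters of $R$ is a regular system of parameters of $R[X]_{\mathfrak q}$. Hence $R[X]_{\mathfrak q}/(h)$ is a regular local ring of dimension $\dim R$.

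Finally, I would identify $R[X]_{\mathfrak q}/(h)$ with $S_{\mathfrak p}$. The natural surjection has kernel $K$; after tensoring with $L = \QF(R)$, both sides become $L[\eta]$. For the target this is clear; for the source, write $h = p q$ in $L[X]$ with $p$ the minimal polynomial of $\eta$ over $L$, and observe that $h'(\eta) = p'(\eta) q(\eta)$ being a unit forces $\gcd(p, q) = 1$ in $L[X]$, so $L[X]_{(p)}/(h) = L[X]_{(p)}/(p) = L[\eta]$. Thus $K$ is $R$-torsion. But $R[X]_{\mathfrak q}/(h)$ is a domain containing $R$ (since $\deg h \geq 1$ yields $R \hookrightarrow R[X]/(h)$), hence torsion-free over $R$, so $K = 0$ and $S_{\mathfrak p} \cong R[X]_{\mathfrak q}/(h)$ is regular. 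The main obstacle is the middle step: verifying that $h$ is a regular parameter of $R[X]_{\mathfrak q}$, by carefully extracting the multiplicity-one information from the hypothesis that $h'(\eta)$ is a unit.
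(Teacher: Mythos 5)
Your route is genuinely different from the paper's. The paper works with $R'=R[\eta]_\MI$ directly: it proves $\dim R'=\dim R$ via incomparability, and then shows that $\MI R'$ is the full maximal ideal of $R'$ by proving that the residue ring $R'/\MI R'\cong\bigl(R/\MI[X]/(\bar h)\bigr)_{\QI}$ is a zero-dimensional reduced local ring, hence a field; the unit hypothesis on $h'(\eta)$ enters through the factorization $\bar h(X)=(X-\bar\eta)\bar t(X)$ with $\bar t\notin\QI$. You instead realize $S_\PI$ as the quotient of the regular local ring $R[X]_\QI$ (of dimension $d+1$) by a regular parameter. Your regular-parameter step is correct and is the same arithmetic fact in disguise: the multiplicity $e$ of the minimal polynomial $\bar f$ of $\bar\eta$ in $\bar h$ must be $1$, because $e\geq 2$ would force $\bar h'(\bar\eta)=0$; this together with $\bar h\neq 0$ (which follows from $h'(\eta)$ being a unit) gives $\text{ht}(\QI)=d+1$ and shows $h,x_1,\ldots,x_d$ is a regular system of parameters. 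Your framing has the aesthetic advantage of cutting $S_\PI$ out as a hypersurface section and avoids the paper's tacit identification $R[\eta]=R[X]/(h)$.

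There is, however, a gap in the last step, where you identify $R[X]_\QI/(h)$ with $S_\PI$. You claim that after tensoring with $L=\QF(R)$ the source becomes $L[\eta]$, and justify this by computing $L[X]_{(p)}/(h)$. But $R[X]_\QI\otimes_R L$ is some localization of $L[X]$ contained in $L[X]_{(p)}$, and it is not obvious that it equals $L[X]_{(p)}$: a priori a prime $(f)\neq(p)$ of $L[X]$ could survive, namely one for which every $g\in R[X]$ with $f\mid g$ in $L[X]$ satisfies $g(\eta)\in\MI_v$. If any such factor of $q$ survives, then $\bigl(R[X]_\QI/(h)\bigr)\otimes_R L$ would be a nontrivial product and the kernel $K$ would not be torsion. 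The fix is short but must be said: because $h$ is a regular parameter, $R[X]_\QI/(h)$ is a domain, and since $R\hookrightarrow R[X]_\QI/(h)$ the set $R\setminus\{0\}$ consists of nonzerodivisors; hence $\bigl(R[X]_\QI/(h)\bigr)\otimes_R L$ is again a domain. It is also a localization of the Artinian ring $L[X]/(h)\cong L[\eta]\times L[X]/(q)$, and a localization of this product that is a domain must kill all but one factor. Since the map onto $L[\eta]$ is nonzero, the surviving factor is $L[\eta]$, giving $K\otimes_R L=0$ as you wanted. (Alternatively, one can bypass the tensor argument entirely: $(h)$ and the kernel $I_\QI$ of $R[X]_\QI\to S_\PI$ are both primes of the UFD $R[X]_\QI$, and assuming $h$ is monic — as both your argument and the paper's implicitly do — going-down for the integral extension $R\subseteq R[\eta]$ over the normal domain $R$ gives $\dim S_\PI=d$, so $\text{ht}(I_\QI)=1=\text{ht}(h)$, forcing $I_\QI=(h)$.) With this observation supplied, your proof is complete.
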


To prove Proposition \ref{Spiv} we will need the a few basic results. A ring $R$ is said to be \textbf{reduced} if it has no nonzero nilpotent elements.

\begin{Lema}\label{LZDRSLF}
Every zero-dimensional reduced local ring $(R,\MI)$ is a field.
\end{Lema}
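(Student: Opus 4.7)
The plan is very short. Since $(R,\MI)$ is local, $\MI$ is the unique maximal ideal. Since $R$ has Krull dimension zero, every prime ideal is maximal, so $\MI$ is in fact the unique prime ideal of $R$. Therefore the nilradical of $R$, which equals the intersection of all prime ideals of $R$, coincides with $\MI$ itself.

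Because $R$ is reduced, the nilradical is $(0)$. Combining, $\MI=(0)$, and a local ring whose maximal ideal is zero is a field.

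The only step that requires anything beyond direct unpacking of definitions is the identification of the nilradical with the intersection of all prime ideals; this is a standard fact (proved by applying Zorn's lemma to exhibit, for each non-nilpotent element $f$, a prime ideal avoiding the multiplicative set $\{f^n\}$, exactly as in Lemma \ref{Zorn} above). So there is no real obstacle; the proof will be two or three lines.
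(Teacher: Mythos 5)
Your proof is correct and is essentially the same as the paper's: the paper also reduces to showing $\MI=(0)$ and invokes Lemma \ref{Zorn} to produce, for a hypothetical non-nilpotent $f\in\MI$, a prime $\QI$ with $f\notin\QI$, which forces $\QI\subsetneq\MI$ and contradicts zero-dimensionality. You have merely repackaged that Zorn argument as the standard identity ``nilradical $=$ intersection of all primes,'' noting that $\MI$ is the unique prime.
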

\begin{proof}
It is enough to prove that $\MI=\{0\}$. Take $f \in\MI$ and suppose that $f$ is not nilpotent. By Lemma \ref{Zorn}, there exists a prime ideal $\QI$ of $R$ such that $f\notin \QI$. Since $f\in \MI\setminus \QI$ we obtain that $(0)\subseteq \QI\subsetneq \MI$ which is impossible because $R$ is zero-dimensional. Therefore, every element of $\MI$ is nilpotent and since $R$ is reduced, we obtain that $\MI=\{0\}$.
\end{proof}
\begin{Lema}\label{Locaker}
Let $R$ and $S$ be two rings and $\phi:R\lra S$ be a surjective ring homomorphism. If $\PI$ is a prime ideal of $R$ containing $\ker(\phi)$, then $\phi(\PI)$ is a prime ideal of $S$ and $\phi$ induces a surjective ring homomorphism $\Phi:R_\PI\lra S_{\phi(\PI)}$ by setting $\Phi(a/b)=\phi(a)/\phi(b)$ for $a,b \in R$ with $b \notin \PI$. Moreover, $\ker(\Phi)=\{a/b\in R_\PI\mid a,b \in R, b \notin \PI\mbox{ and }a\in \ker(\phi)\}$.
\end{Lema}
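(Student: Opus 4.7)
The lemma is standard in flavor, so the plan is essentially to assemble four routine checks, taking care that equivalence classes in the localizations are treated correctly.

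First, I would establish that $\phi(\PI)$ is a prime ideal of $S$. Since $\phi$ is surjective, $\phi(\PI)$ is automatically an ideal, so the content is primality. The cleanest route is to consider the composite map $R \xrightarrow{\phi} S \to S/\phi(\PI)$; it is surjective with kernel $\phi^{-1}(\phi(\PI))$, which equals $\PI$ because $\ker(\phi) \subseteq \PI$. Hence $S/\phi(\PI) \cong R/\PI$, and the latter is a domain, giving the primality. A useful by-product of this computation is the identity $\phi^{-1}(\phi(\PI))=\PI$, equivalently $\phi(R\setminus\PI)\subseteq S\setminus\phi(\PI)$, which is the key fact that makes the formula for $\Phi$ work.

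Next I would construct $\Phi$ via the universal property of localization. The map $R \to S_{\phi(\PI)}$, $a \mapsto \phi(a)/1$, sends every $b \in R\setminus\PI$ to a unit of $S_{\phi(\PI)}$ (by the observation above), so it factors uniquely through $R_\PI$, producing a ring homomorphism $\Phi$ with $\Phi(a/b)=\phi(a)/\phi(b)$. This automatically handles well-definedness and the ring-homomorphism axioms without any direct manipulation of equivalence classes.

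For surjectivity, take $s/t \in S_{\phi(\PI)}$ with $t \notin \phi(\PI)$. Using surjectivity of $\phi$, write $s=\phi(a)$ and $t=\phi(b)$; since $t\notin \phi(\PI)$ and $\phi^{-1}(\phi(\PI))=\PI$, we get $b\notin\PI$, and $\Phi(a/b)=s/t$. For the description of $\ker(\Phi)$, the inclusion $\supseteq$ is obvious: if $a\in\ker(\phi)$, then $\Phi(a/b)=0/\phi(b)=0$. For $\subseteq$, suppose $\Phi(a/b)=\phi(a)/\phi(b)=0$ in $S_{\phi(\PI)}$. Then there exists $s\in S\setminus\phi(\PI)$ with $s\phi(a)=0$; write $s=\phi(c)$ with $c\in R\setminus\PI$ (again using $\phi^{-1}(\phi(\PI))=\PI$). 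Then $\phi(ca)=0$, so $ca\in\ker(\phi)$, and in $R_\PI$ we have $a/b = ca/(cb)$ with $cb\notin\PI$ and $ca\in\ker(\phi)$, placing $a/b$ in the asserted set.

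There is no real obstacle; the one point requiring any care is the systematic use of $\phi^{-1}(\phi(\PI))=\PI$, which underlies both the well-definedness of $\Phi$ and the rewriting $a/b = ca/(cb)$ needed to identify the kernel.
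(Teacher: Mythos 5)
Your proof is correct, and its overall skeleton matches the paper's: the key observation in both is that $\ker(\phi)\subseteq\PI$ forces $\phi^{-1}(\phi(\PI))=\PI$ (equivalently, $\phi(R\setminus\PI)=S\setminus\phi(\PI)$), and the kernel computation is essentially identical (rewriting $a/b = ca/(cb)$ with $c$ a preimage of the annihilator $s$). Where you diverge is in two of the verifications, and in both cases your route is cleaner. For primality you pass through the isomorphism $S/\phi(\PI)\cong R/\PI$ induced by the composite $R\to S\to S/\phi(\PI)$, whereas the paper verifies directly that $S\setminus\phi(\PI)$ is multiplicatively closed by lifting $a',b'$ to $a,b\notin\PI$ and using primality of $\PI$. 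For the construction of $\Phi$ you invoke the universal property of localization applied to $R\to S_{\phi(\PI)}$, $a\mapsto\phi(a)/1$, which dispenses with the well-definedness check in one stroke; the paper instead verifies well-definedness by hand, chasing the witness $l\in R\setminus\PI$ with $l(ad-bc)=0$ through $\phi$. Your version buys conceptual economy and avoids any explicit manipulation of equivalence classes; the paper's version is more self-contained for a reader who wants every step spelled out at the level of elements. Both are complete and correct.
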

\begin{proof}
The fact that $\ker(\phi)\subseteq \PI$ yields that $\phi(R\setminus\PI)=S\setminus \phi(\PI)$. Indeed, the inclusion $S\setminus \phi(\PI)\subseteq \phi(R\setminus\PI)$ follows from $\phi$ being surjective. To prove the other inclusion, take an element $a\in R\setminus \PI$. For any element $b\in R$ with $\phi(b)=\phi(a)$ we have that $a-b\in \ker(\phi)\subseteq \PI$. Hence, $b\notin \PI$ and consequently, $\phi(a)\notin \phi(\PI)$.

The image of any ideal under a surjective ring homomorphism is an ideal. Take elements $a',b'\in S\setminus \phi(\PI)$ and choose $a,b\in R$ such that $a'=\phi(a)$ and $b'=\phi(b)$. Then $a,b\notin \PI$ and hence $ab\notin\PI$. Consequently, $a'b'=\phi(ab)\notin\phi(\PI)$. Therefore, $\phi(\PI)$ is a prime ideal of $S$.

To prove that $\Phi$ is well-defined, we take two pairs $(a,b),(c,d)\in R\times (R\setminus \PI)$ such that $a/b=c/d$ in $R_\PI$. This means that there exists $l\in R\setminus \PI$ such that $l(ad-bc)=0$. Thus $\phi(l)(\phi(a)\phi(d)-\phi(b)\phi(c))=0$ and since $\phi(l)\notin \phi(\PI)$ we conclude that $\phi(a)/\phi(b)=\phi(c)/\phi(d)$ in $S_{\phi(\PI)}$. Also, since $\phi$ is surjective, so is $\Phi$.

If $a\in \ker(\phi)$, then $\Phi(a/b)=\phi(a)/\phi(b)=0$ for every $b\notin \PI$. On the other hand, take $\alpha=c/d\in R_\PI, d\notin \PI$ such that $\Phi(\alpha)=0$. Then there exists $l'\in S\setminus\phi(\PI)$ such that $l'\phi(c)=0$. For each $l\in R$ such that $l'=\phi(l)$ we have that $l\notin\PI$, hence $b:=ld$ belongs to $R\setminus \PI$. Set $a=lc$. Then $c/d=a/b$ in $R_\PI$ and $a\in \ker(\phi)$. Consequently, $\ker(\Phi)\subseteq \{a/b\in R_\PI\mid  a,b \in R, b \notin \PI\mbox{ and }a\in \ker(\phi)\}$. Therefore, $\ker(\Phi)= \{a/b\in R_\PI\mid  a,b \in R, b \notin \PI\mbox{ and }a\in \ker(\phi)\}$
\end{proof}
\begin{proof}[Proof of Proposition \ref{Spiv}]
Let $R^{(s)}=R[\eta_1,\ldots,\eta_s]_{\MI_v\cap R[\eta_1,\ldots,\eta_s]}$ for $1\leq s\leq r$ and $R^{(0)}=R$. Since
\[
R^{(s)}=R^{(s-1)}[\eta_s]_{\MI_v\cap R^{(s-1)}[\eta_s]}\mbox{ for } 1\leq s\leq r,
\]
it is enough to prove our proposition for $r=1$. For this case we denote $\eta_1=\eta$, $h_1=h$ and $R'=R^{(1)}=R[\eta]_{\MI_v\cap R[\eta]}$.

We claim that $\dim (R)=\dim (R')$. Indeed, since $R[\eta]$ is integral over $R$, for every proper chain of prime ideals $C$ of $R$ there exists a proper chain of prime ideals $C_\eta$ of $R[\eta]$ lying over $C$ which is contained in $\MI_v\cap R[\eta]$. On the other hand, by the incomparability property, for each proper chain of prime ideals of $R[\eta]$ their intersections with $R$ form a chain of prime ideals in $R$ of the same length. Hence, $\hei(\MI_v\cap R[\eta])=\dim (R)$. Also, since there exists a bijection between the chains of prime ideals of $R'=R[\eta]_{\MI_v\cap R[\eta]}$ and the chains of prime ideals of $R[\eta]$ contained in $\MI_v\cap R[\eta]$ we conclude that $\hei(\MI_v\cap R[\eta])=\dim(R')$. Therefore, $\dim (R)=\dim (R')$.

Since $R$ is regular, its maximal ideal $\MI$ is generated by $d=\dim (R)$ many elements. We want to prove that also the maximal ideal $\MI'$ of $R'$ is generated by $d$ many elements. To do this, it is sufficient to show that $\MI'$ is generated by the same generators as $\MI$, i.e., that $\MI' = \MI R'$. We have to prove that $\MI R'$ is a maximal ideal, or, equivalently, that $R'/\MI R'$ is a field.

Consider the canonical surjective homomorphism
\[
\psi :R[\eta]= R[X]/(h(X))\lra R/\MI[X]/(\overline h(X))
\]
where $\overline g(X)$ denotes the polynomial in $R/\MI[X]$ obtained from $g(X)\in R[X]$ by reduction of its coefficients modulo $\MI$. Consider the ideal $\mathfrak q=\{\overline g(X)+(\overline h(X))\mid g(\eta)\in\MI_v\}=\psi(\MI_v\cap R[\eta])$ of $\left(R/\MI[X]\right)/(\overline h(X))$. By Lemma \ref{Locaker}, $\QI$ is a prime ideal and $\psi$ induces a surjective homomorphism
\[
\Psi:R'\lra \left(R/\MI[X]/(\overline h(X))\right)_\mathfrak q
\]
whose kernel is $\{f(\eta)/g(\eta)\in R'\mid f,g \in R[X], f(\eta)\in \ker(\psi)\mbox{ and }
g(\eta)\notin \MI_v\cap R[\eta]\}$. Take $\alpha\in \ker(\Psi)$ and write $\alpha=f(\eta)/g(\eta)$ as in the description of this kernel. Then $\psi(f(\eta))=0$ means that $\overline f(X)\in(\overline h(X))$. Hence, $f(X)-h(X)g(X)\in \MI R[X]$ for some $g\in R[X]$. Then $f(\eta)\in\MI R[\eta]$. Write $f(X)=a_0+\ldots+a_rX^r$. Then $a_i\in\MI$ for every $i$, $0\leq i\leq r$. Also, $\eta^i/g(\eta)\in R'$ for every $i$, $0\leq i\leq r$. Hence,
\[
\alpha=\frac{f(\eta)}{g(\eta)}=a_0\cdot\frac{1}{g(\eta)}+\ldots+a_r\cdot\frac{\eta^r}{g(\eta)}\in \MI R',
\]
which proves that $\ker(\Psi)\subseteq \MI R'$. The inclusion $\MI R'\subseteq \ker(\Psi)$ is trivial. Therefore, $R'/\MI R'\cong \left(R/\MI[X]/(\overline h(X))\right)_\mathfrak q$.

We have to prove that $K:=\left(R/\MI[X]/(\overline h(X))\right)_\mathfrak q$ is a field. Since $\left(R/\MI[X]\right)/(\overline h(X))$ is an integral extension of a field, it is zero-dimensional. Hence, $K$ is also zero-dimensional. On the other hand, $K$ is a local ring and thus, by use of Lemma \ref{LZDRSLF}, it remains to show that $K$ is reduced.

Since $h(\eta)=0$ we can write $\overline h(X)=(X-\overline \eta) \overline t(X)$ for some $t(X)\in R[X]$. This means that $h(X)=(X-\eta)t(X)+l(X)$ for some $l(X)\in \MI R[X]$. Then $h'(\eta)=t(\eta)+l'(\eta)$ and since $v(h'(\eta))=0$ we obtain that $v(t(\eta))=0$. Hence, $\overline t(X)+(\overline h(X))\notin\QI$.

Take an element $f(X)\in R[X]$ and assume that $\overline f^n(X)+(\overline h(X))=0$ for some $n\in \N$. This implies that $\overline h(X)$ divides $\overline f^n(X)$ and since $X-\overline \eta$ divides $\overline h(X)$ we obtain that $\overline f(X)=(X-\overline \eta)\overline q(X)$ for some $q(X)\in R[X]$. Thus,
\[
\overline t(X) \overline f(X)=(X-\overline\eta) \overline t(X) \overline q(X)=\overline h(X) \overline q(X),
\]
and hence $\overline t(X)\overline f(X)+(\overline h(X))=0$.

Take now $\alpha=\left(\overline f(X)+(\overline h(X))\right)/\left(\overline g(X)+(\overline h(X))\right)\in K$ such that $\alpha^n=0$ for some $n\in\N$. Let us prove that $\alpha=0$. By definition of localization, there exists $\overline {t_1}(X)+(\overline h(X))\notin \QI$ such that $\overline {t_1}(X)\overline f^n(X)+(\overline h(X))=0$. Then $\left(\overline {t_1}(X)\overline{f}(X)\right)^n+(\overline h(X))=0$ and by the last paragraph, we obtain that $\overline t(X)\overline{t_1}(X)\overline f(X)+(\overline h(X))=0$. Since $\overline t(X)\overline{t_1}(X)+(\overline h(X))\notin \QI$ we conclude, by definition of localization, that $\alpha=0$ in $K$. Therefore, $K$ is reduced, which concludes our proof.
\end{proof}

\begin{proof}[Proof of Theorem \ref{Teo_4}]
Take any finite set $Z\subseteq \VR_F$. We can assume, adding a set of generators of $F$ over $K$ to $Z$ if necessary, that $K(Z)=F$. By assumption, there exist a transcendence basis $T$ of $F|K$ and henselian elements $\eta_1,\ldots,\eta_r$ over $L=K(T)$ such that $Z\subseteq\VR_L[\eta_1,\ldots,\eta_r]$ and $(L|K,v)$ admits local uniformization. Let $Z'\subseteq \VR_L$ be the finite set containing the coefficients of all $h_i$'s and the coefficients of all $\zeta\in Z$ as polynomials in $\eta_1,\ldots,\eta_r$. Since $(L|K,v)$ admits local uniformization, there exists a model $V=\Sp(R)$ of $L|K$ such that $\VR_{V,\PI}=R_{\MI_L\cap R}$ is regular (where $\PI=\MI_L\cap R$ is the center on $V$ of the restriction of $v$ to $L$) and such that $Z'\subseteq \VR_{V,\PI}$. Let $R'=R[\eta_1,\ldots,\eta_r]$ and $V'=\Sp(R')$ (observe that $V'$ is a model of $(F|K,v)$ because $K(Z)=F$). Then by Proposition \ref{Spiv},
\[
\VR_{V',\PI'}=\VR_{V,\PI}[\eta_1,\ldots,\eta_r]_{\MI_F\cap \VR_{V,\PI}[\eta_1,\ldots,\eta_r]}
\]
is regular, where $\PI'=\MI_F\cap R'$ is the center of $v$ on $V'$. Also, since each element $\zeta\in Z$ is a polynomial in $\VR_L[\eta_1,\ldots, \eta_r]$ with coefficients in $\VR_{V,\PI}$ we have that $\zeta\in\VR_{V',\PI'}$. Therefore, $(F|K,v)$ admits local uniformization.
\end{proof}

\begin{proof}[Proof of Theorem \ref{Teo_5}]
Take a finite subset $Z$ of $\VR_L$. By assumption, there exists a transcendence basis $T$ of $F|K$ such that $(K(T)|K,v)$ admits local uniformization and $F\subseteq K(T)^i$. By Theorem \ref{main2} and \ref{main3} there exist henselian elements $\eta,r,s\in\VR_F$ such that $Z\subseteq \VR_{K(T)}[\eta,r,s]$. Hence, we can apply Theorem \ref{Teo_4} to conclude that $(F|K,v)$ admits local uniformization.
\end{proof}

\setcounter{section}{5}
\par\bigskip\bigskip\bigskip\bigskip
\begin{center}
{\large\bf Appendix: Spectra consisting of a finite number of chains}\\
{\it Hagen Knaf}
%\\
%email: hagen.knaf@t-online.de}
%\date{15.09.2013}
\end{center}
\par\bigskip
Let $R$ be a commutative ring with $1$. A subset $C\subseteq\Spec (R)$ totally ordered with respect to inclusion is called a \textbf{chain}. For a ring extension $R\subseteq S$ and a chain $C\subseteq\Spec(S)$ the chain $\{P\cap R\sep P\in C\}\subseteq\Spec(R)$ is denoted by $C\cap R$.

The purpose of this note is to provide a proof of the following result:
\begin{theorem}
\label{T:FCP}
Let $R\subseteq S$ be an integral extension of domains such that the corresponding extension $K\subseteq L$ of fraction fields is finite, and assume that $R$ is integrally closed. If $\Spec (R)$ is the union of finitely many chains, then the same is true for $\Spec(S)$.
\end{theorem}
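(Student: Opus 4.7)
My plan is to reduce to the case where $L|K$ is normal and $S$ is the full integral closure of $R$ in $L$, and then exploit the transitive action of the finite group $G:=\mathrm{Aut}(L|K)$ on the fibres of the contraction map $\pi\colon\Spec(S)\to\Spec(R)$. For the reduction, let $L^n$ be a normal closure of $L$ over $K$ (still finite over $K$) and let $T$ be the integral closure of $R$ in $L^n$. The contraction map $\Spec(T)\to\Spec(S)$ is surjective by lying-over and, by incomparability, sends strict inclusions to strict inclusions, so it carries each chain in $\Spec(T)$ to a chain in $\Spec(S)$; any finite chain-cover of $\Spec(T)$ therefore transfers to $\Spec(S)$. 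In the reduced situation, Lemma \ref{trans} gives that $G$ acts transitively on every fibre $\pi^{-1}(P)$, and the fibres are finite of cardinality at most $|G|$.

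Fix a chain $C=(P_\lambda)_{\lambda\in\Lambda}\subseteq\Spec(R)$. The key step is to construct a \emph{section} $\widetilde C=(Q_\lambda)_{\lambda\in\Lambda}\subseteq\Spec(S)$: a chain with $Q_\lambda\cap R=P_\lambda$. Granting this, the translates $\sigma\widetilde C$ for $\sigma\in G$ are themselves chains over $C$, and transitivity on fibres guarantees that every $Q\in\pi^{-1}(C)$ lies in at least one of them: if $Q\cap R=P_\lambda$, pick $\sigma\in G$ with $\sigma Q_\lambda=Q$. Hence $\pi^{-1}(C)$ is the union of at most $|G|$ chains, and if $\Spec(R)=C_1\cup\cdots\cup C_n$, then $\Spec(S)=\bigcup_i\pi^{-1}(C_i)$ is a union of at most $n|G|$ chains. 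I would build $\widetilde C$ by Zorn's lemma applied to the set of partial sections (ordered by extension); the unions and intersections of chains of primes of $S$ are again primes, providing the natural candidates when extending past a limit index.

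The main obstacle I anticipate lies in the ``sandwich'' case of the extension step: given a partial section defined on $A\cup B\subseteq\Lambda$ with $A<\lambda_0<B$ both non-empty, one needs a prime $Q_{\lambda_0}$ over $P_{\lambda_0}$ strictly between the auxiliary primes $Q_A:=\bigcup_{\mu\in A}Q_\mu$ and $Q_B:=\bigcap_{\mu\in B}Q_\mu$. Going-up from $Q_A$ (available for any integral extension) produces a prime over $P_{\lambda_0}$ that contains $Q_A$ but need not lie below $Q_B$; going-down from $Q_B$ (available because $R$ is integrally closed) produces one that lies below $Q_B$ but need not contain $Q_A$. Reconciling these two candidates inside the sandwich will, I expect, require the transitive $G$-action on the fibre over $P_{\lambda_0}$ together with a careful choice of automorphism in the decomposition subgroup of $Q_B$---a classical refinement of Chevalley-style chain-lifting results for normal integral extensions. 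This is the delicate point I would need to pin down to complete the proof.
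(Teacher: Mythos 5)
Your overall architecture — reduce to $S$ the integral closure of $R$ in a normal closure $L$, lift each chain $C\subseteq\Spec(R)$ to a chain $\widetilde C\subseteq\Spec(S)$ with $\widetilde C\cap R=C$, then observe that the $\mathrm{Aut}(L|K)$-translates of these lifted chains cover $\Spec(S)$ by transitivity on fibres — is exactly the paper's strategy. The place where you leave a gap is precisely the place where the whole difficulty lives: the chain-lifting step. Your Zorn's lemma plan reduces to the ``sandwich'' problem you identify, which is the going-between property: given $Q_A\subseteq Q_B$ in $\Spec(S)$ lying over $P_A\subseteq P_B$ and a prime $P$ with $P_A\subsetneq P\subsetneq P_B$, find $Q$ with $Q_A\subseteq Q\subseteq Q_B$ and $Q\cap R=P$. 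This does not follow from going-up plus going-down (going-up from $Q_A$ ignores $Q_B$, going-down from $Q_B$ ignores $Q_A$, and after modding out by $Q_A$ the base $R/P_A$ is no longer integrally closed, so going-down is unavailable there). Your suggestion to repair it with a well-chosen element of the decomposition group of $Q_B$ would require that $D_{Q_B}$ act transitively on the primes over a fixed $P_A$ that are contained in $Q_B$; you neither prove this nor cite it, and it is not a formal consequence of Lemma~\ref{trans} (which gives transitivity of the full group $G$ on the whole fibre, with no control over where the conjugate of $Q_B$ goes). As written, the proof does not close.

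The paper sidesteps the sandwich entirely by appealing to the Kang--Oh lifting theorem (Theorem~\ref{T:Lift}): every chain in $\Spec(R)$ lifts to a chain in $\Spec(O)$ for some valuation ring $O$ of $K$ containing $R$. One then extends $O$ to a valuation ring $T$ of $L$; since $\Spec(T)\to\Spec(O)$ is an order isomorphism, the chain lifts uniquely to $\Spec(T)$, and intersecting with $S$ (which sits inside $T$ because $T$ is integrally closed and contains $R$) produces the desired chain $\widetilde C$ in $\Spec(S)$. Valuation rings have totally ordered spectra, so the going-between difficulty evaporates; the nontrivial content is packaged into Kang--Oh, which is a genuine theorem rather than a Zorn's lemma exercise. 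If you want to complete your argument, you should either invoke Kang--Oh at the chain-lifting step, or prove a going-between statement for integral extensions of integrally closed domains — but be aware that the latter is not a routine localization argument, which is exactly why the paper routes through valuation rings.

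One minor stylistic note: you do not need the paper's intermediate reduction through the fixed field of $\mathrm{Aut}(L|K)$ (the purely inseparable step), since Lemma~\ref{trans} is stated for normal, not necessarily Galois, extensions; your direct use of $\mathrm{Aut}(L|K)$ on the normal hull is fine.
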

It can be derived from the following result of Kang and Oh, \cite{K-O}:
\begin{theorem}
\label{T:Lift}
Let $R$ be a domain with field of fractions $K$. For every chain $C\subseteq\Spec(R)$ there exists a valuation ring $O\supseteq R$ of $K$ and a chain $C_O\subseteq\Spec(O)$ such that $C_O\cap R=C$.
\end{theorem}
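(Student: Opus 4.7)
My plan is to reduce to the case where $S$ is the integral closure of $R$ in a normal extension of $K$, and then apply Theorem~\ref{T:Lift} chain-by-chain together with Lemma~\ref{trans}.

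\emph{Reductions.} First I would replace $S$ by $S'$, the integral closure of $R$ in $L$. Since $S\subseteq S'$ is integral, lying over makes the order-preserving contraction $\Spec(S')\to\Spec(S)$ surjective, and the image of a finite union of chains under an order-preserving map is again a finite union of chains. Next I would replace $L$ by its normal hull $L'$ over $K$ and $S'$ by the integral closure $S''$ of $R$ in $L'$; the same lying-over argument reduces matters to showing $\Spec(S'')$ is a finite union of chains. Thus I may assume $L|K$ is normal and $S$ is the integral closure of $R$ in $L$.

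\emph{Construction.} Write $\Spec(R)=C_1\cup\cdots\cup C_n$. For each $i$, Theorem~\ref{T:Lift} yields a valuation ring $O_i\supseteq R$ of $K$ and a chain $C_{O_i}\subseteq\Spec(O_i)$ with $C_{O_i}\cap R=C_i$. Let $\tilde O_i$ be the integral closure of $O_i$ in $L$. Exactly as in the proof of Proposition~\ref{finchain}, $\tilde O_i$ is a Pr\"ufer domain with only finitely many maximal ideals (the finitely many extensions of $O_i$ to $L$), so $\Spec(\tilde O_i)$ is a finite union of chains. Since $S$ is integral over $R\subseteq O_i$, one has $S\subseteq \tilde O_i$, giving an order-preserving contraction $c_i\colon\Spec(\tilde O_i)\to\Spec(S)$ whose image is a finite union of chains.

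\emph{Key step and main obstacle.} The crucial claim is that every $P\in\Spec(S)$ with $P\cap R\in C_i$ lies in $c_i(\Spec(\tilde O_i))$; granting this, $\bigcup_i c_i(\Spec(\tilde O_i))$ covers $\Spec(S)$ and the theorem follows. To prove the claim, set $Q=P\cap R$ and pick $Q_O\in C_{O_i}$ with $Q_O\cap R=Q$; by lying over for the integral extension $O_i\subseteq\tilde O_i$, there exists $P_0\in\Spec(\tilde O_i)$ with $P_0\cap O_i=Q_O$, whence $P_0\cap R=Q$. Both $P$ and $P_0\cap S$ are primes of $S$ lying over $Q$, so by Lemma~\ref{trans} there is $\sigma\in\Gl(L|K)$ with $\sigma(P_0\cap S)=P$. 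Because $\sigma$ fixes $K\supseteq O_i$, we have $\sigma(\tilde O_i)=\tilde O_i$, and therefore $\sigma(P_0)\in\Spec(\tilde O_i)$ contracts to $P$ in $S$. I expect this matching step to be the main obstacle, and it is precisely the reason for reducing to the normal case so that Lemma~\ref{trans} can be invoked to conjugate an arbitrary prime of $S$ over $Q$ onto the one coming from $\tilde O_i$.
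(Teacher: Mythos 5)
You have proved the wrong statement. The result to be established here is Theorem \ref{T:Lift} (the Kang--Oh lifting theorem): for a \emph{single} domain $R$ with fraction field $K$ and an arbitrary chain $C\subseteq\Spec(R)$, one must produce a valuation ring $O\supseteq R$ of $K$ and a chain $C_O\subseteq\Spec(O)$ with $C_O\cap R=C$. Your argument instead establishes Theorem \ref{T:FCP} (that $\Spec(S)$ is a finite union of chains for a suitable integral extension $R\subseteq S$), and it does so by invoking Theorem \ref{T:Lift} as a black box in your ``Construction'' step. As a proof of Theorem \ref{T:Lift} it is therefore circular: the one thing that needs proving is exactly the thing you assume. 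Nothing in your write-up addresses the actual content of Theorem \ref{T:Lift}, namely how to manufacture a valuation overring of $K$ whose spectrum contracts onto a prescribed, possibly infinite, chain of primes of $R$; that requires a genuinely different construction (for a finite chain, iterated composition of valuation rings centered at the successive primes; for an infinite chain, the limit argument of Kang and Oh). The paper itself does not reprove this statement but cites \cite{K-O}.

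For what it is worth, read as a proof of Theorem \ref{T:FCP} your argument is essentially sound and close in spirit to the appendix's: the appendix likewise reduces to the normal case, lifts each chain $C_i$ of $\Spec(R)$ to a valuation ring $O_i$ of $K$ via Theorem \ref{T:Lift}, passes to a valuation ring $T_i$ of $L$ lying over $O_i$, and covers $\Spec(S)$ by the Galois conjugates of the resulting chains, using that the primes of $S$ over a fixed prime of $R$ are conjugate (Lemma \ref{trans}). Your variant --- replacing the single $T_i$ by the semilocal Pr\"ufer ring $\tilde O_i$ and conjugating individual primes rather than whole chains --- works equally well, and your use of Lemma \ref{trans} for normal (not necessarily Galois) extensions even lets you skip the appendix's separate treatment of the purely inseparable part. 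But it is an answer to a different question than the one posed.
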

The following two observations are simple but useful:
\begin{lemma}
\label{L:integral_descend}
Let $R\subseteq S$ be an integral ring extension. If $\Spec (S)$ is the union of finitely many chains, then the same is true for $\Spec(R)$.
\end{lemma}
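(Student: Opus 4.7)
The plan is to exploit the standard properties of integral ring extensions: lying over (which gives surjectivity of the contraction map on spectra) and the order-preservation of the contraction map. First I would consider the contraction map
\[
\pi:\Spec(S)\longrightarrow\Spec(R),\qquad Q\longmapsto Q\cap R.
\]
This map is surjective by the lying-over theorem for integral extensions, and it is order-preserving: $Q_1\subseteq Q_2$ in $\Spec(S)$ implies $Q_1\cap R\subseteq Q_2\cap R$ in $\Spec(R)$.

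Next I would observe that under an order-preserving map the image of any chain is again a chain, since total orders are preserved by monotone maps. Hence for every chain $C\subseteq\Spec(S)$ the set $C\cap R=\pi(C)\subseteq\Spec(R)$ is a chain.

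Finally, suppose $\Spec(S)=C_1\cup\cdots\cup C_n$ is a finite union of chains. Applying $\pi$ to both sides and using surjectivity yields
\[
\Spec(R)=\pi(\Spec(S))=\pi(C_1)\cup\cdots\cup\pi(C_n)=(C_1\cap R)\cup\cdots\cup(C_n\cap R),
\]
which expresses $\Spec(R)$ as a union of finitely many chains, as required.

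There is essentially no obstacle here: the result follows formally from surjectivity of the contraction map (lying over) together with its monotonicity. Incomparability is not needed for this direction; it would become relevant only if one wanted control on the lengths of the contracted chains. The content of the lemma is that ``being covered by finitely many chains'' descends along integral extensions, and the proof is a one-line application of the image of a chain under an order-preserving surjection.
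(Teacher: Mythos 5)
Your proof is correct and is exactly the argument the paper gives (the paper states it in a single sentence: the restriction map preserves inclusions and, by Lying Over, is surjective). Your write-up is simply a more detailed version of the same observation.
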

\proof
The restriction map $\Spec(S)\rightarrow\Spec(R)$ preserves inclusions and, due to the Lying-over-Theorem, is surjective.
\qed
\begin{lemma}
\label{L:purely_insep_ascend}
Let $R\subseteq S$ be an integral extension of domains such that the corresponding extension $K\subseteq L$ of fraction fields is purely inseparable, and $R=K\cap S$. Then the restriction map $\Spec(S)\rightarrow\Spec(R)$ is an isomorphism of partially ordered sets.
\end{lemma}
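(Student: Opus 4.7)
The plan is to leverage the purely inseparable hypothesis together with the condition $R=K\cap S$ to show that every element of $S$ has a power in $R$. In characteristic zero, purely inseparable forces $L=K$ and so $S=R$ by the assumption $R=K\cap S$, rendering the statement trivial; so assume $\operatorname{char}(K)=p>0$. For each $s\in S\subseteq L$, purely inseparability gives an integer $n\geq 0$ with $s^{p^n}\in K$, and since $s^{p^n}\in S$, the hypothesis yields $s^{p^n}\in K\cap S=R$.

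Given this key observation, I would prove the three things that constitute an isomorphism of partially ordered sets.

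First, injectivity: if $P_1,P_2\in\Spec(S)$ both contract to the same prime $Q$ of $R$, then for any $s\in P_1$ one has $s^{p^n}\in P_1\cap R=Q=P_2\cap R\subseteq P_2$, and since $P_2$ is prime this forces $s\in P_2$; by symmetry $P_1=P_2$. Second, surjectivity is immediate from the Lying-Over theorem, which applies because $R\subseteq S$ is integral. Third, for the order structure: that the restriction map preserves inclusion is trivial, and it reflects inclusion by exactly the argument above — if $P_1\cap R\subseteq P_2\cap R$ and $s\in P_1$, then $s^{p^n}\in P_1\cap R\subseteq P_2$, hence $s\in P_2$. Combining these shows the map is a poset isomorphism.

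I do not anticipate any real obstacle here; the statement is essentially a bookkeeping consequence of the observation that $p^n$-th powers push $S$ into $R$. The only mild subtlety worth flagging is to dispatch the characteristic zero case at the outset so that the exponent $p^n$ makes sense in the main argument.
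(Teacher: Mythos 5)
Your proof is correct, and it rests on the same central observation as the paper's --- namely that for every $s\in S$ there is $m$ with $s^{p^m}\in K\cap S=R$ --- but the way you organize the argument is genuinely different. The paper fixes a prime $P\in\Spec(R)$ and shows that $\Rad(PS)$ is a prime ideal of $S$: if $st\in\Rad(PS)$ then $(st)^{p^m}=s^{p^m}t^{p^m}\in R\cap\Rad(PS)=P$, and primality of $P$ pushes $s$ or $t$ into $\Rad(PS)$. Since every prime over $P$ contains $\Rad(PS)$, incomparability for integral extensions then forces $\Rad(PS)$ to be the unique prime over $P$; bijectivity follows, and the order-isomorphism is (implicitly) concluded via going-up. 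You instead work directly on the restriction map: Lying-Over gives surjectivity, and the power trick shows at one stroke that the map is injective and that it reflects inclusions, so no appeal to going-up or to incomparability is needed to get the poset isomorphism. Your version is slightly more self-contained and makes the order-theoretic content explicit, whereas the paper's version identifies the fiber over $P$ concretely as $\{\Rad(PS)\}$, which is a more structural piece of information. One small remark: the characteristic-zero case you flag at the outset is also silently covered by the convention that a purely inseparable extension in characteristic zero is trivial, but stating it is harmless and arguably cleaner.
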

\proof
It suffices to prove that over every prime $P\in\Spec(R)$ lies exactly one prime of $S$. To this end one shows that the radical $Q:=\Rad(PS)$ is prime: assume $st\in Q$ for $s,t\in S$. By assumption there exists $m\in\mathbb{N}$ such that $s^{p^m},t^{p^m}\in K$. Hence $s^{p^m}t^{p^m}\in S\cap K=R$ and thus $s^{p^m}t^{p^m}=(st)^{p^m}\in Q\cap R=P$. Consequently $s\in Q$ or $t\in Q$ as desired.
\qed
\begin{proof}[Proof of Theorem \ref{T:FCP}] It suffices to consider the case of an integrally closed domain $S$: if the theorem is proved in this case one can conclude that the spectrum of the integral closure of $S$ in $L$ is a union of finitely many chains. By Lemma \ref{L:integral_descend} this property then descends to $S$.

Next let $N$ be the normal hull of $L$ over $K$. If the spectrum of the integral closure of $S$ in $N$ is a union of finitely many chains, again by Lemma \ref{L:integral_descend} this holds for $\Spec (S)$ too. Consequently one can assume $K\subseteq L$ to be normal.

Now let $M$ be the fixed field of the automorphism group of the extension $K\subseteq L$, and let $R^\prime$ be the integral closure of $R$ in $M$. By assumption about $R$ and applying Lemma \ref{L:purely_insep_ascend} one gets that $\Spec (R^\prime)$ is a union of finitely many chains. Consequently one can assume $K\subseteq L$ to be Galois.

Let $\Spec(R)=C_1\cup\ldots\cup C_r$, where $C_i$ are chains. By Theorem \ref{T:Lift} for every $i$ there exists a valuation ring $O_i\supseteq R$ of $K$ and a chain $C_{O_i}\subseteq\Spec(O_i)$ such that $C_i=C_{O_i}\cap R$. Let $T_i$ be a valuation domain of $L$ lying over $O_i$. Since the spectra of $O_i$ and $T_i$ are order-isomorphic via the restriction map, there exists a chain $C_{T_i}\subseteq\Spec(T_i)$ with $C_{T_i}\cap O_i=C_{O_i}$. Let $D_i:=C_{T_i}\cap S$ -- note that $S\subseteq T_i$ since $R\subseteq S$ is integral. Then $D_i\cap R=C_i$. For every automorphism $\sigma$ of the Galois group $G$ of $K\subseteq L$ define the chains
\[
D_i^\sigma :=\{\sigma(Q)\sep Q\in D_i\}.
\]
Since the primes lying over a given prime $P\in\Spec(R)$ are $G$-conjugates one gets
\[
\Spec(S)=\bigcup\limits_{\sigma\in G}\bigcup\limits_{i=1}^r D_i^\sigma .
\]
\end{proof}

\end{document}